\def\namedlabel#1#2{\begingroup
 #2%
 \def\@currentlabel{#2}%
 \phantomsection\label{#1}\endgroup
}
\newtheorem{theorem}{Theorem}[section]
\newtheorem*{acknowledgement*}{Acknowledgements}
\newtheorem{corollary}[theorem]{Corollary}
\newtheorem{lemma}[theorem]{Lemma}
\theoremstyle{definition}
\theoremstyle{definition}
\newcommand{\ben}{\begin{enumerate}}
\newcommand{\een}{\end{enumerate}}
\newcommand{\ed}{\end{document}}
\definecolor{rrr}{rgb}{.9,0,.1}
\definecolor{rr}{rgb}{.8,0,.3}
\definecolor{pp}{rgb}{.5,0,.7}
\title{Composition Properties of Hyperbolic Links in Handlebodies}
\author[C. Adams]{Colin Adams}
\address{Department of Mathematics, Williams College, Williamstown, MA 01267}
\email{cadams@williams.edu}
\author[D. Santiago]{Daniel Santiago}
\address{Department of Mathematics, M.I.T., Cambridge, MA 02139}
\email{dsantiag@mit.edu}
\subjclass[2020]{57K10,57K32}
\keywords{Hyperbolic Links, Handlebodies}
\begin{document}  

\begin{abstract} We consider knots and links in handlebodies that have hyperbolic complements and operations akin to composition. Cutting the complements of two such open along separating twice-punctured disks such that each of the four resulting handlebodies has positive genus, and  gluing a pair of pieces together along the twice-punctured disks in their boundaries, we show the result is also hyperbolic. This should be contrasted with composition of any pair of knots in the 3-sphere, which is never hyperbolic. Similar results are obtained when both twice-punctured disks are in the same handlebody and we glue a resultant piece to itself along copies of the twice-punctured disks on its boundary.  We include applications to staked links. %{\color{blue} Just a first attempt. Feel free to edit.}
\end{abstract}

\maketitle

%Color coding: Dora - {\color{magenta}magenta}, Daniel - {\color{red}red}, Zach - {\color{violet}violet}, Alex - {\color{orange}orange}, Max - {\color{teal}teal}, Maya - {\color{green}green}, Joye - {\color{purple}purple}, Ben - {\color{pink}pink}

%{\color{blue} All figures must be pdf, not png}
%{\color{magenta} figures are still drafts/placeholders}
%\\
%{\color{blue} I would suggest a notation like $(H, L)$ for a link in a handlebody and $(H, L, D)$ for a choice of disk in $(H, L)$. This is what Alex Simons and I did when were talking about composition of virtual knots. Helps to keep track of everything at once.}

\section{Introduction}
\label{Intro}
\noindent A compact orientable $3$-manifold $M$ is tg-hyperbolic if the manifold $M'$ obtained from $M$ by shaving off all torus boundaries and capping off all sphere boundaries with balls admits a finite volume hyperbolic metric such that all remaining boundary components are totally geodesic.  For a link $L$ in a handlebody $H$, we say that the pair $(H,L)$ is \emph{tg-hyperbolic} if the complement of an open regular neighborhood of $L$ in $H$ is tg-hyperbolic. By the Mostow-Prasad Rigidity Theorem, such a hyperbolic metric will only depend on the complement $H \setminus L$ up to homeomorphism, which allows us to associate a hyperbolic volume to $(H,L)$ that is invariant under ambient isotopies of $L$ in $H$. 

Work of W. Thurston implies that a compact 3-manifold is tg-hyperbolic if and only if it contains no properly embedded essential disks, spheres, annuli or tori. A surface is essential if  it is not boundary-parallel and it is incompressible and boundary-incompressible. 

Examples of knots and links in handlebodies with complements that are tg-hyperbolic appear in \cite{Adamsnew}, \cite{BakerHoffman},  \cite{Frigerio}, \cite{FMP}, and \cite{simplesmallknots}.  In  \cite{AltPaper}, a large source of such examples is provided.  Results from \cite{hp17} can also be used to generate many more.

Let $L_1$ and $L_2$ be two links in handlebodies $H_1$ of genus $g_1$  and $H_2$ of genus $g_2$ respectively. Just as we have composition of two links in the 3-sphere, we would like to define composition of these links in handlebodies. 

To that end, let $D_1\subset H_1,D_2 \subset H_2$ be properly embedded disks twice punctured by $L_1,L_2$ respectively which separate  balls $B_1$ and $B_2$ from $H_1$ and $H_2$ such that $B_1 \cap L_1$ and $B_2 \cap L_2$ are unknotted arcs. Discarding the balls yields %The triples $(H_1,L_1,D_1)$ and $(H_2,L_2,D_2)$ and an orientation preserving homeomorphism $\phi: D_1 \to D_2$ sending $\partial D_1$ to $\partial D_2$ and $L_1\cap D_1$ to $L_2 \cap D_2$ determine a link $L_3$  in the handlebody $H_3$ of genus $g_1 + g_2$, denoted $(H_1,L_1,D_1) \oplus_{\phi} (H_2,L_2,D_2)$,  as follows. 
%Cutting $H_1$ and $H_2$  along $D_1$ and $D_2$ yields the balls %{\color{blue} Need to say something about discarding the balls that result so you are left with the pieces you describe. Are you allowing knotted arcs in the balls? }{\color{red} No I am not allowing this. Will change.}
%$B_1 \subset H_1, B_2 \subset H_2$ and
two handlebodies $H_1' \subset H_1$ and $ H_2' \subset H_2$. Let $L'_1 = H_1' \cap L_1$ and $L'_2 = H_2' \cap L_2$. Glue $H_1'$ to $H_2'$ along $D_1$ and $D_2$ via $\phi$. Since $\phi$ sends the endpoints of the arc in $L'_1$ to the endpoints of the arc in $L'_2$, this results in a link in a handlebody, denoted $(H'_1,L'_1,D_1) \oplus_{\phi} (H'_2,L'_2,D_2)$ in $H_3$ as in Figure \ref{CompositionFigure}. %{\color{blue} Maybe redefine this to be $(H'_1,L'_1,D_1) \oplus_{\phi} (H'_2,L'_2,D_2)$ to be consistent with later use?} {\color{red} Do you mean $L'_1,L'_2$ are the arcs after cutting? }%{\color{blue} Does this work okay for you?} {\color{red} Yep just need to change the arc notation in the  figure. (Done)}
%If $L,L'$ are knots, this yield an embedding of the composition of $L,L'$ in $S^3$ into $H_3$, as shown in Figure \ref{CompositionFigure}.

\begin{figure}[htbp]
    \centering
    \includegraphics[scale=0.4]{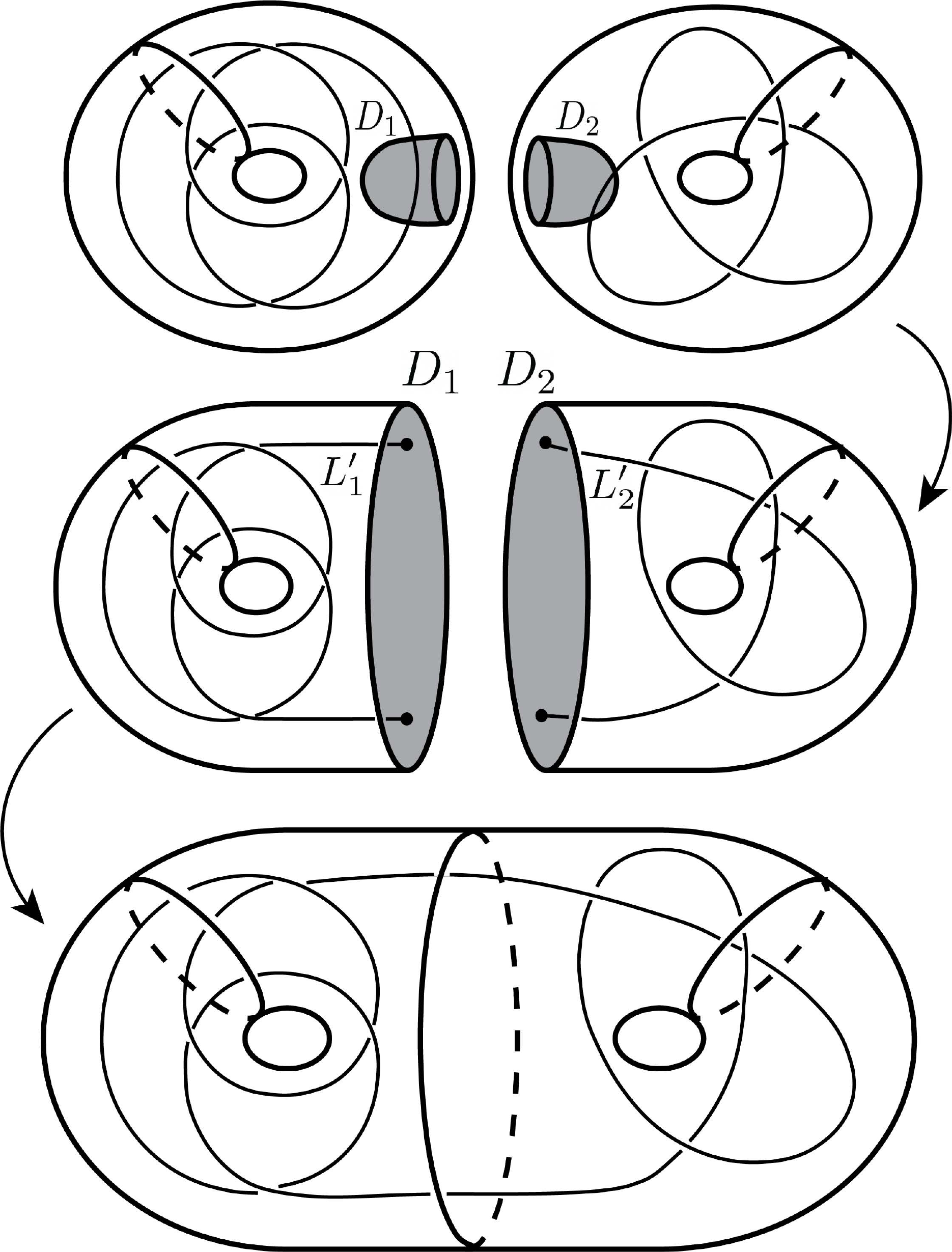}
    \caption{Forming the link $(H'_1,L'_1,D_1) \oplus_{\phi} (H'_2,L'_2,D_2)$}.
    \label{CompositionFigure}
\end{figure}

In contrast to the usual composition of links, the link/handlebody pair  $(H'_1,L'_1,D_1) \oplus_{\phi} (H'_2,L'_2,D_2)$ depends highly on $D_1,D_2,$ and $\phi$. Furthermore, while composition of links in $S^3$ never results in a hyperbolic link, $(H'_1,L'_1,D_1) \oplus_{\phi} (H'_2,L'_2,D_2)$ can be tg-hyperbolic.

However, even if both $H_1 \setminus L_1$ and $H_2 \setminus L_2$ are tg-hyperbolic, it is not always true that $(H'_1,L'_1,D_1) \oplus_{\phi} (H'_2,L'_2,D_2)$ is tg-hyperbolic. In fact, the disks $D_1$ and $D_2$ can always be chosen so that at least one is ``knotted" and  there is an essential torus in the link complement associated to $(H'_1,L'_1,D_1) \oplus_{\phi} (H'_2,L'_2,D_2)$ as shown in Figure \ref{TorusinComp}.

\begin{figure}[htbp]
\includegraphics[scale=0.4]{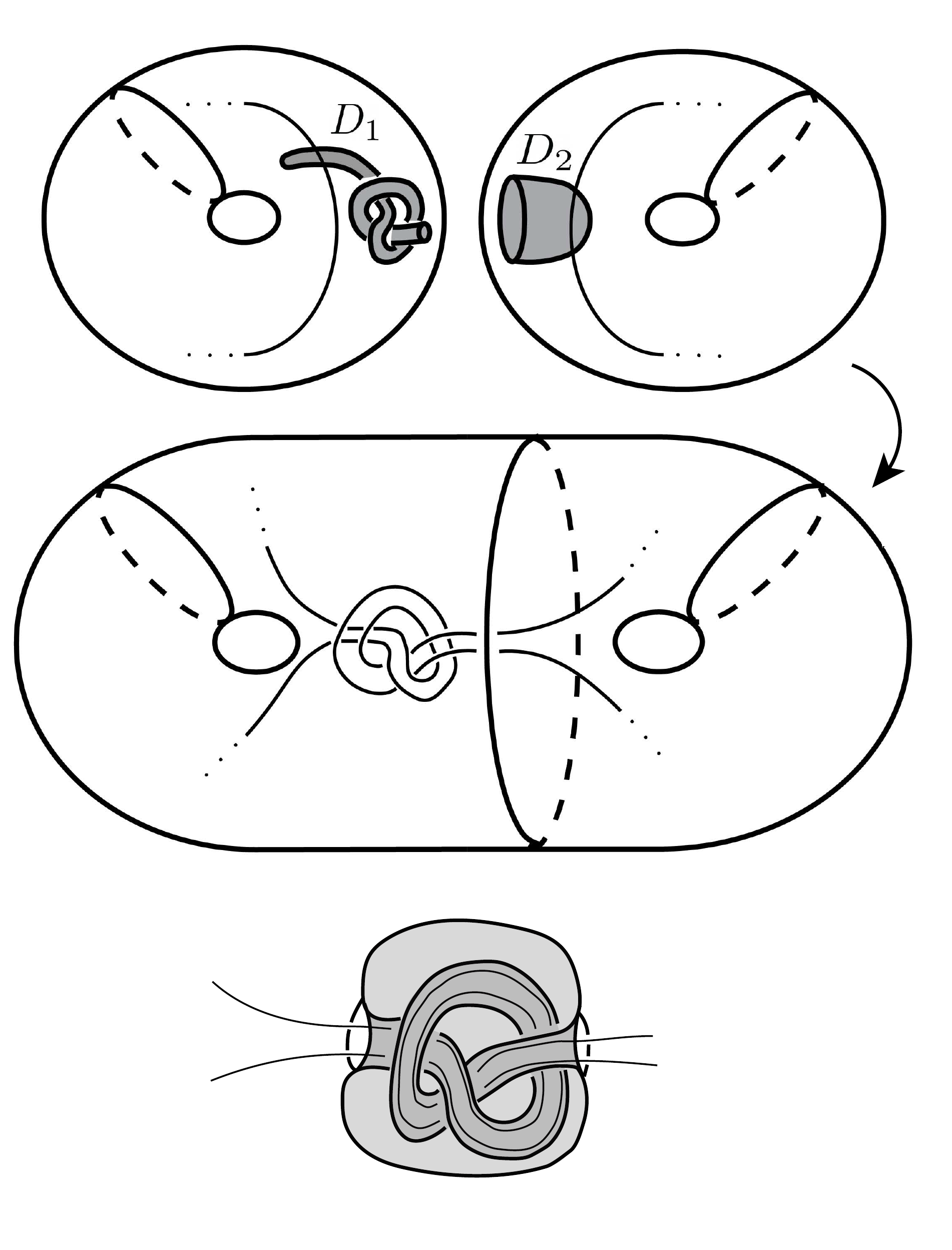}
\caption{By choosing one of $D_1,D_2$ to be ``knotted", one can create an essential torus in the complement  $H_3 \setminus L_3$ which separates a knot exterior from $H_3$ of the form appearing in the last image.} 
%{\color{blue} slight rewording? ``as shown in the last image" might be ``the knot exterior appearing in the last image."}
%This will yield an incompressible torus if the components of $L,L'$ which puncture $D_1,D_2$ respectively are nontrivial in $\pi_1(H_g),\pi_1(H_{g'})$ or are nontrivially linked with other components of $L,L'$ which are nontrivial in $\pi_1(H_g),\pi_1(H_{g'})$, one of which will always hold if $L,L'$ are tg-Hyperbolic. {\color{blue} Fix caption.}

%{\color{blue} I think you do not want your knots to be longitudes in each of the solid tori. I would add in a disk with a $K_1$ and a $K_2$ in it and then the longitude going into it to make it clear this is a very general construction.  And I would draw the knotted torus slightly different, shading the outside of it with an opaque grey. I would also move the one meridian right to the edge where the torus turns inside out and put another one on the opposite end where it turns inside out.}
\label{TorusinComp}
\end{figure}

In Section \ref{SwitchSection}, we provide a method to avoid the problem with ``knotted disks''. In Theorem \ref{SwitchMovetheorem}, we prove that if the two handlebody/link pairs cut along their disks appear as submanifolds of handlebody/link pairs of higher genus that are tg-hyperbolic,  then the composition of the original pair is tg-hyperbolic.
%{\color{blue} Daniel, check that this paraphrase is correct. I find the following version just difficult to parse.} {\color{red} Yep it looks correct.}
%That is to say, we prove that if all genera following are at least one, and one of $H_1 \setminus L_1$ or $H_2 \setminus L_2$ is tg-hyperbolic and there exist triples $(H_{3},L_3,D_3),$ $(H_{4},L_4,D_4)$ and  homeomorphisms $\kappa: D_1 \to D_3, \rho: D_2 \to D_4$ such that $(H_1,L_1,D_1) \oplus_{\kappa} (H_{3},L_3,D_3),(H_{2},L_2,D_2) \oplus_{\rho} (H_{4},L_4,D_4)$ are tg-hyperbolic, then $(H_{1},L_1,D_1) \oplus_{\phi} (H_{2}, L_2,D_2)$ is tg-hyperbolic.
%{\color{blue} Fix notation here.} {\color{red} Should be fixed now}
We also show an analogue of this result where one cuts along two separating twice punctured disks in a single handlebody and glues the resulting manifold to itself along a homeomorphism of the twice punctured disks. 

In Section \ref{AppSection}, we discuss applications. As mentioned,  \cite{AltPaper} and \cite{hp17} provide many examples of tg-hyperbolic links in handlebodies, and our construction here can be applied to them to generate many more.  Furthermore,  these results can be applied to staked links introduced in \cite{generalizedknotoids}, which correspond to link projections with isolated poles placed in the complementary regions, over which strands of the link cannot pass. These are equivalent to links in handlebodies.

We can also consider applications to knotoids. In \cite{hypknotoids}, a definition of what it means for a planar knotoid to be hyperbolic is given in terms of a corresponding knot in a handlebody being tg-hyperbolic. So the results here can be applied to extend the known examples of hyperbolic planar knotoids. %{\color{blue} Added this.}

In addition to considering knots in handlebodies, there is work that has been done on hyperbolicity of links in thickened surfaces, as in \cite{small18} and \cite{hp17}. Questions about compositions have been addressed in that situation, as in \cite{AdamsSimons}. %Although we will not prove it here, 
Converting a method applied there to our situation can avoid the problem of knotted disks and allow composition of tg-hyperbolic links in handlebodies to be tg-hyperbolic without requiring them to be submanifolds as described above. That is, we can take a geodesic $g$ that runs from the surface of the handlebody to the link. Then a regular neighborhood of $g$, including its endpoint on the link, will be a properly embedded twice-punctured disk that cannot be knotted and therefore allows composition to yield tg-hyperbolic links in handlebodies. However, we do not include the details of the proof here.

%In Section \ref{InfFamilySection}, we show that a certain infinite family of knots in the genus 2 handlebody is hyperbolic. This is relevant in other work by the authors, as it shows that a corresponding infinite family of planar knotoids is hyperbolic under the map $\phi^G_{\mathbb{R}^2}$ defined in \cite{hypknotoids}. 

\subsection{Acknowledgements} A special thanks to the other members of the knot theory research group in the SMALL REU program at Williams College in summer of 2022, including Alexandra Bonat, Maya Chande, Maxwell Jiang, Zachary Romrell,Benjamin Shapiro and Dora Woodruff. Without the many helpful conversations with them, this paper would not have come into being.  
 
\section{Proof of Main Result}
\label{SwitchSection}

\noindent 

\noindent For any submanifold $S$ of a smooth manifold $M$, we denote by $N(S)$ a \textit{closed} regular neighborhood of $S$ in $M$ and by $\mathring{N}(S)$ the interior of $N(S)$. For a space $X$, we denote by $|X|$ the number of connected components of $X$.

Let $H_1,H_2$ be two handlebodies %of genus $g_{1,1}+g_{1,2},g_{2,1}+g_{2,2}$ respectively, where $g_{i,j} > 0$, 
that contain links $L_1$ and $L_2$ such that $H_1 \setminus L_1$ and $H_2 \setminus L_2$ are tg-hyperbolic. 
Let $E_1$ and $E_2$ be properly embedded disks in $H_1$ and $H_2$, which separate $H_1$ and $H_2$ into handlebodies $H_{1,1},H_{1,2}$ and $H_{2,1},H_{2,2}$ of genera $g_{1,1},g_{1,2}$ and $g_{2,1},g_{2,2}$ respectively, where all genera are at least 1.  Suppose further that $E_1$ and $E_2$ are each  twice punctured by $L_1$ and $L_2$ respectively. Let $L_{i,j} = L_i \cap H_{i,j}$. 

 We denote by $M_{i,j} = H_{i,j} \setminus \mathring{N}(L_{i,j})$ 
%the submanifolds of the complements $H_i \setminus \mathring{N}(L_{H_i})$ separated by $E_1,E_2$,
and by  $F_i = E_i \setminus \mathring{N}(L_i)$ the corresponding separating surfaces.  %Connecting the endpoints of each $L_{i,j}$  with an embedded arc in $E_i$, and pushing the resulting curve into the interior of $H_{i,j}$ yields a handlebody/link pair  $(H_{i,j},\overline{L}_{i,j})$. 
As we will ultimately only be interested in %$L_{1,1},L_{2,2},
$M_{1,1}$ and $M_{2,2}$, we will for convenience often drop the extra subscripts and write %$L_{1,1},L_{2,2}$,
$M_{1,1}$ and $M_{2,2}$ as %$L_1,L_2$,
$M_1$ and $M_2$ respectively. %{\color{blue} We don't want to have two $L_1$s and two $L_2$s, so I eliminated the option of using $L_1$ and $L_2$ to represent $L_{1,1}$ and $L_{2,2}$. I don't think we use it anyway.}

%{\color{blue} These choices of g, g', $g_1$ and $g_2$ seem arbitrary and confusing. I would start with two handlebodies $H_1$ and $H_2$ and then talk about disks that split the first into handlebodies $H_{1,1}$ of genus $g_{1,1}$, $H_{1,2}$ of genus $g_{1,2}$, and the second into $H_{2,1}$ of genus $g_{2,1}$ and $H_{2,2}$ of genus $g_{2,2}$. But I do understand why you made the choice you did, since you will only consider one pair of the resulting handlebodies.So up to you. Also using notation of pairs and triples may make this less confusing.} 
%{\color{red} Am adding in these changes}%

%{\color{blue} Have to push connection into the interior.}

\begin{figure}[htbp]
\includegraphics[scale=0.6]{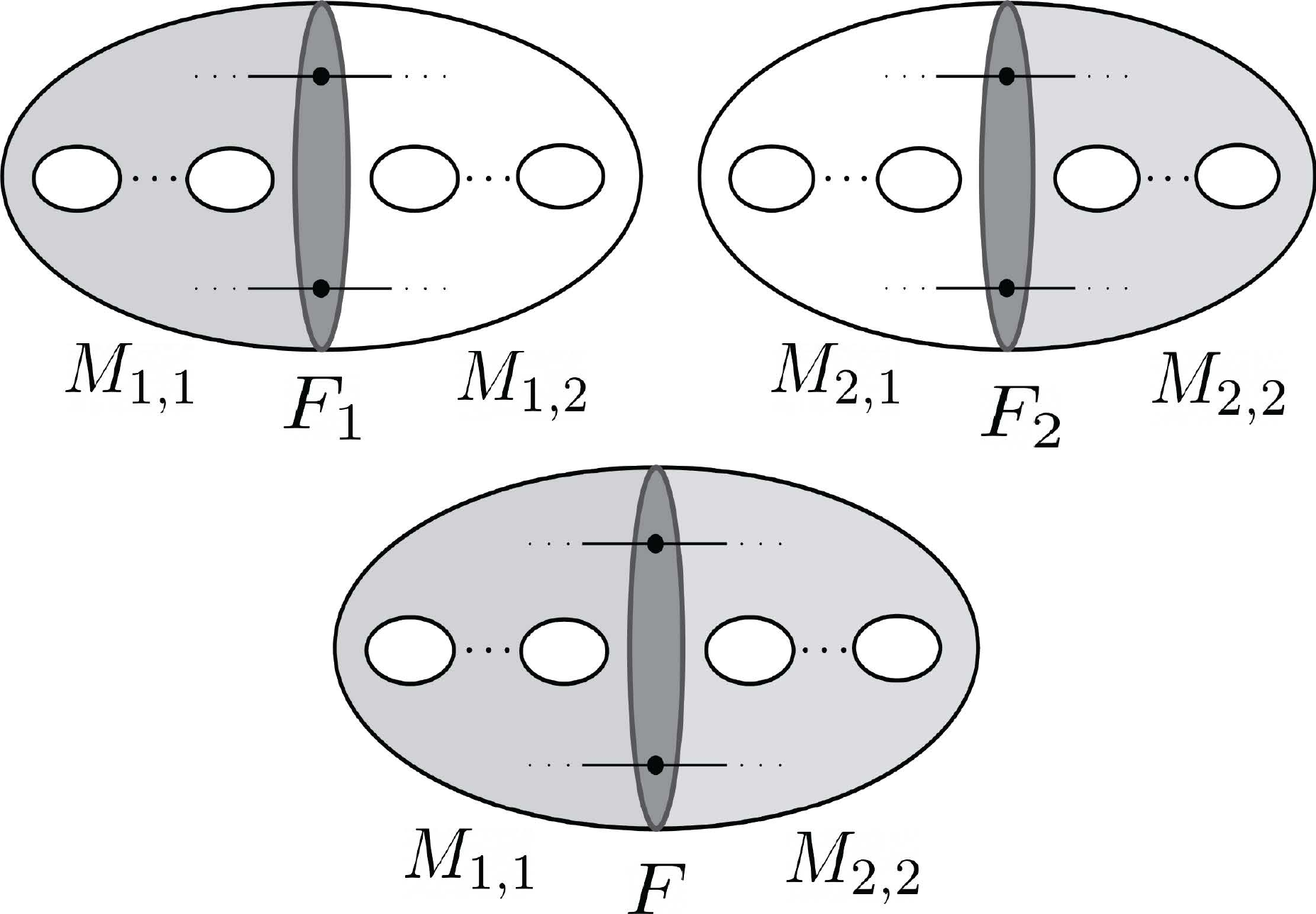}

\caption{The links $L_1,L_2$ in $H_1,H_2$ respectively and the link $L$ in the handlebody $H$.}
\label{SwitchMove}
\end{figure}
\vspace{0.5 cm}

Gluing $H_{1,1}$ to $H_{2,2}$ along an orientation preserving homeomorphism\\ $\phi: F_1 \to F_2$ sending $\partial E_1$ to $\partial E_2$ and $\partial F_1 \cap \partial N(L_{1,1})$ to $\partial F_2 \cap \partial N(L_{2,2})$ yields a manifold/link pair  denoted $(H_{1,1}, L_{1,1}, E_1) \oplus_{\phi} (H_{2,2},L_{2,2},E_2)$ 
which is a handlebody $H$ of  genus $g_{1,1}+g_{2,2}$ containing the link $L$ formed by gluing $L_{1,1}$ and $L_{2,2}$ along their endpoints, as in Figure \ref{SwitchMove}. Let $H_{L}$ be the complement of  $\mathring{N}(L)$ in $H$.
We denote by $F$ the image of $F_1$ and $F_2$ in $H_L$, and by $E$ the separating disk in $H$ corresponding to $F$. 
%We denote by $K_{1,i},K_{2,j}$ the components of $L$ that lie in $H_{1,1},H_{2,2}$ respectively. {\color{blue} Fix this.} These are components of $L_{1,1},L_{2,2}$ respectively. 

There is one component of $L$, which we denote by $K$, that is cut into two arcs $K_1$ and $K_2$ by $F$, the arcs of which are in $H_{1,1}$ and $H_{2,2}$ respectively. %These two arcs correspond to two components $\overline{K}_1$ and $\overline{K}_2$ in $\overline{L}_{1,1}$ and  $\overline{L}_{2,2}$ respectively.   
We denote $\partial N(K)$ by $T_{K}$. We denote  the sub-annuli of $T_{K}$ corresponding to the arcs $K_1$ and $K_2$ by $A_{K_1}$ and $A_{K_2}$ respectively.   A hyperbolic manifold is always assumed tg-hyperbolic unless otherwise stated.

\begin{theorem} \label{SwitchMovetheorem}
Let $L_1$ and $L_2$ be links in $H_1$ and $H_2$ such that $H_1 \setminus L_1$ and $H_2 \setminus L_2$ are tg-hyperbolic, with $E_1 \subset H_1$ and $E_2 \subset H_2$ twice-punctured disks separating each of $H_1$ and $H_2$ into handlebodies, all of positive genus.  If $\phi: E_1 \to E_2$ is a homeomorphism sending $\partial E_1$ to $\partial E_2$ and sending punctures to punctures, then $(H, L) = (H_{1,1}, L_{1,1}, E_1) \oplus_{\phi} (H_{2,2},L_{2,2},E_2)$ is tg-hyperbolic.

%{\color{blue} Reworded to not assume hyperbolicity of $H_{1,1} \setminus \overline{L}_{1,1}$.} {\color{red} Since in the definition of $\oplus_{\phi}$ we require the disks to separate balls containing unknotted arcs, we technically would need to write this as $(H, L) = (H_{1,1}, \overline{L}_{1,1}, D_1) \oplus_{\phi} (H_{2,2},\overline{L}_{2,2},D_2)$, but I think we can just add a comment about this in the setup.}

\end{theorem}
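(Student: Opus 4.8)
\emph{Proof plan.} The plan is to verify the topological criterion of Thurston quoted in the introduction: it suffices to show that $H_L$ contains no properly embedded essential sphere, disk, annulus, or torus. The separating thrice-punctured sphere $F$ is the organizing object; the strategy is to put any candidate essential surface into minimal position with respect to $F$ and then play the two sides $M_1$ and $M_2$ against each other, using that each is already known to be tg-hyperbolic with $F_i$ as a totally geodesic boundary piece.

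First I would establish the two structural facts that make $F$ useful. (1) $F$ is incompressible and boundary-incompressible in $H_L$. Since every essential simple closed curve in a thrice-punctured sphere is isotopic to one of its three boundary circles, and since the positive-genus hypothesis forces $\partial E_i$ to separate $\partial H_i$ into two positive-genus subsurfaces (hence to be essential and non-disk-bounding), any compression or boundary-compression of $F$ could be isotoped into one side and would yield a compression or boundary-compression of $F_i$ inside the tg-hyperbolic manifold $H_i \setminus L_i$, which has incompressible boundary and no essential disk --- a contradiction. (2) $M_1$ and $M_2$ are themselves tg-hyperbolic. Because $F_i$ is an essential thrice-punctured sphere in the hyperbolic manifold $H_i \setminus L_i$, Adams' theorem isotopes it to a totally geodesic surface; cutting a hyperbolic manifold along an incompressible totally geodesic surface produces hyperbolic pieces having that surface as totally geodesic boundary. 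Thus each $M_{i,j}$, and in particular $M_1$ and $M_2$, is tg-hyperbolic and so contains no essential sphere, disk, annulus, or torus.

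Now let $S$ be an essential sphere, disk, annulus, or torus in $H_L$, isotoped so that $|S \cap F|$ is minimal. Using incompressibility of $F$ and irreducibility of $M_1$ and $M_2$, an innermost trivial circle of $S \cap F$ either permits a reduction of $|S \cap F|$ or exhibits a compression of $S$; so I may assume every circle of $S \cap F$ is essential in both $S$ and $F$, and boundary-incompressibility of $F$ lets me discard boundary-parallel arcs. If these reductions empty $S \cap F$, then $S$ lies in $M_1$ or $M_2$ and remains essential there (a sphere, disk, annulus, or torus cannot be boundary-parallel to the pair of pants $F_i$, and incompressibility of $F$ transmits the rest), contradicting the tg-hyperbolicity of that piece. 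Otherwise the essential circles and arcs cut $S$ into pieces lying in $M_1$ or $M_2$. When $S$ is a sphere or torus the intersection is circles only: these cut $S$ into sub-annuli, each an essential annulus in some $M_i$ --- impossible. The disk case is dispatched by an outermost-arc argument producing a boundary-compression of $F_i$, again impossible.

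The main obstacle is the annulus case in the presence of arcs of intersection. Here $\partial A$ lies on $\partial H_L$ (on $\partial H$ or on link tori such as $T_K$), the arcs of $A \cap F$ have endpoints where $\partial A$ crosses $\partial F$, and the arcs and circles cut $A$ into rectangles and sub-annuli distributed between $M_1$ and $M_2$. I expect the bulk of the work to be the careful bookkeeping showing that each surviving piece is an essential annulus or an essential boundary-compressing disk in its $M_i$ --- exploiting that the three boundary circles of $F$ sit on $\partial H$ and on $T_K$, and that $A_{K_1}$ and $A_{K_2}$ are the annular cusp cross-sections along $K$ --- so that any nonempty intersection contradicts the tg-hyperbolicity of $M_1$ or $M_2$. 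Forcing $A \cap F = \emptyset$ then confines $A$ to a single side, contradicting the absence of essential annuli there and completing the verification of Thurston's criterion.
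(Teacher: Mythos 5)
Your overall architecture --- Thurston's criterion, minimal position with respect to the pair of pants $F$, and a case analysis on spheres, disks, annuli and tori --- is the same as the paper's, and your item (1) is essentially Lemma \ref{SurfaceLemmaSwitchMove}. The genuine gap is your structural fact (2), that $M_1$ and $M_2$ are themselves tg-hyperbolic. Adams' theorem on totally geodesic thrice-punctured spheres applies to spheres all three of whose ends are cusps of a complete finite-volume hyperbolic $3$-manifold; here $F_i$ is a twice-punctured disk with one boundary circle on the totally geodesic boundary $\partial H_i$ and only two ends on $\partial N(L_i)$, and there is no theorem letting you isotope such a surface to be totally geodesic (nor would cutting along it yield pieces with totally geodesic boundary --- you would create corners along $\partial E_i$). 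The paper deliberately never claims $M_1$ and $M_2$ are tg-hyperbolic. Instead, every time a candidate surface lands in $M_i$, it is re-embedded as a properly embedded surface in the ambient tg-hyperbolic manifold $H_i\setminus \mathring{N}(L_i)$ and shown to be essential \emph{there} (Lemma \ref{NoAnnulionOneSideSwitchMove} and the first half of Lemma \ref{NoToriSwitchMove}); this requires handling the possibility that the re-embedded surface is boundary-compressible or boundary-parallel in the larger manifold through a region crossing $F_i$, which is ruled out using incompressibility and boundary-incompressibility of $F_i$. Without either your fact (2) or this substitute, your repeated contradictions ``essential in $M_i$, impossible'' have nothing to land on.

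A second, related underestimate: even granting hyperbolicity of the pieces, the sub-surfaces into which $F$ cuts an essential annulus or torus are not automatically essential in $M_i$. A sub-annulus of a torus with both boundaries on $F$, or a rectangle with two opposite sides on $F$, is incompressible but may be boundary-parallel in $M_i$; and a parallelism in $M_i$ across $F$ is not a parallelism in $H_L$, so it neither contradicts anything nor automatically reduces $|S\cap F|$ (nested circles on $E$ can obstruct the reduction). The paper's Lemmas \ref{NoAnnuliSwitchMove} and \ref{NoToriSwitchMove} spend most of their length converting these pieces into properly embedded annuli in $H_L$ by capping with once- or twice-punctured sub-disks of $E$, or with sub-annuli of $F$ and of $T_K$, tracking how many punctures each disk contains, and showing the outcome is either a reduction of $|S\cap F|$ or boundary-parallelism of $S$ itself (plus a separate M\"obius band case you do not mention). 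You flag this as ``bookkeeping,'' but it is where the proof actually lives, and your one-line dismissal of the circles-only cases would not survive it.
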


%\noindent Theorem \ref{SwitchMove} implies the result in Section \ref{Intro}{\color{blue} Need the statement in Section 1 to be a theorem so you can refer to it. Not clear here what you are referring to.} by letting $(H_1,L_{H_1}) = (H_g,L,D_1) \oplus_{\kappa} (H_{g''},L'',D_3), (H_2,L_{H_2}) = (H_{g'},L',D_2) \oplus_{\rho} (H_{g'''},L''',D_4)$, so that $(H_g,L) = (H_{1,1},\overline{L}_{1,1}),(H_{g'},L') = (H_{2,2},\overline{L}_{2,2})$ and $(H_{g},L,D_1)\oplus_{\phi} (H_{g'},L',D_2) = (H_L,L)$.

\vspace{0.5 cm}

%\noindent Note that Theorem \ref{SwitchMovetheorem} is equivalent to the formulation given in the introduction, where $(H_{1,1},\overline{L}_{1,1})$ and $(H_{2,2},\overline{L}_{2,2})$ play the roles of $H_1 \setminus L_1$ and $H_2 \setminus L_2$ there. %{\color{blue} Check this.} {\color{red} Looks good}
%From now on, without loss of generality, we assume that $(H_{1,1},\overline{L}_{1,1})$ is tg-hyperbolic. 

To prove Theorem \ref{SwitchMovetheorem}, it is enough to show that since  $H_1  \setminus \mathring{N}(L_1)$ and $H_2\setminus  \mathring{N}(L_2)$
%H_{1,1}\setminus \mathring{N}(\overline{L}_{1,1}), H_1\setminus \mathring{N}(L_1)$ and $H_2 \setminus \mathring{N}(L_2)$ 
contain no essential disks, spheres, annuli and tori, the same holds for $H \setminus \mathring{N}(L)$. 
%{\color{blue} Need to have explained that tg-hyperbolic is equivalent to no essential disks, spheres , annuli or tori in the intro. I put it in. Also , always make your statement if... then... rather than then... if...}
iIn the remainder of this section, we rule out these four kinds of essential surfaces with a sequence of lemmas. %We assume throughout this section that $(H_{1,1},\overline{L}_{1,1})$ is tg-hyperbolic.

%{\color{blue} Should all subsequent $M_1$ and $M_2$ be changed to $M_{1,1}$ and $M_{2,2}$? Or should we say that $M_1$ is the copy of $M_{1,1}$ in $H_L$ and $M_2$ is the copy of $M_{2,2}$ in $H_L$?}

\begin{lemma} The surfaces $F,F_1,F_2$ are incompressible and boundary incompressible in $H_L$.
\label{SurfaceLemmaSwitchMove}
\end{lemma}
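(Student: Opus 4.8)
The plan is to prove the statement for $F$ and to note that $F_1$ and $F_2$ have the same image in $H_L$ as $F$ (they are parallel copies of the twice-punctured disk), so it suffices to treat $F$. Topologically $F$ is a pair of pants: its outer boundary is the curve $\partial E$, a genus-separating curve on $\partial H$, while its two inner boundary circles are meridians of $K$ lying on $T_K$. Since $F$ separates $H_L$ into the two pieces $M_1$ and $M_2$, any compressing or $\partial$-compressing disk $D$ for $F$ has interior disjoint from $F$ and hence lies entirely in $M_1$ or in $M_2$; in the $\partial$-compressing case its arc $\beta \subset \partial H_L$ lies on $\partial H \cup T_K$, which is part of $\partial(H_i \setminus \mathring{N}(L_i))$. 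Thus $D$ restricts to a compressing (resp. $\partial$-compressing) disk for $F_i$ inside $M_i \subset H_i \setminus \mathring{N}(L_i)$, and it is enough to show that each $F_i$ is incompressible and $\partial$-incompressible in the tg-hyperbolic manifold $H_i \setminus \mathring{N}(L_i)$. Note that tg-hyperbolic manifolds are irreducible with incompressible boundary, so the standard surface-theoretic facts below are available.

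For incompressibility, recall that every essential simple closed curve on a pair of pants is isotopic to one of its three boundary components. A compressing disk whose boundary is isotopic to one of the meridians of $K$ could be capped off with the corresponding sub-annulus of $F_i$ to yield a disk in $H_i \setminus \mathring{N}(L_i)$ bounded by a meridian of $K$; this is a compression of the cusp torus $T_K$, i.e.\ an essential disk, contradicting tg-hyperbolicity. A compressing disk whose boundary is isotopic to $\partial E_i$ could likewise be capped off to produce a properly embedded disk with boundary the genus-separating curve $\partial E_i$ on $\partial H_i$, again an essential disk. Hence no compressing disk exists and $F_i$ is incompressible.

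The main work, and the step I expect to be the principal obstacle, is $\partial$-incompressibility. Suppose $F_i$ were $\partial$-compressible. Boundary-compressing the incompressible pair of pants $F_i$ along an essential arc yields a surface $F_i'$ that is a union of annuli, each of which is again incompressible. In a tg-hyperbolic manifold there are no essential annuli, so every incompressible annulus is $\partial$-parallel; if some component of $F_i'$ were an essential annulus we would contradict tg-hyperbolicity immediately, so every component of $F_i'$ must be $\partial$-parallel. The delicate point is then to reverse the $\partial$-compression: if all the annuli obtained are $\partial$-parallel, then $F_i$ itself must be $\partial$-parallel in $H_i \setminus \mathring{N}(L_i)$. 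But $E_i$ separates $H_i$ into two handlebodies of positive genus, so $E_i$, and hence $F_i$, cuts off no product region and is not $\partial$-parallel. This contradiction shows $F_i$ is $\partial$-incompressible. Carefully enumerating the isotopy classes of essential arcs on the pair of pants and verifying that no configuration of resulting $\partial$-parallel annuli can arise without forcing $F_i$ to be $\partial$-parallel is precisely where the positive-genus hypothesis enters and where the argument must be made rigorous.
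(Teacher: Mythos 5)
Your reduction to the pieces $M_i \subset H_i \setminus \mathring{N}(L_i)$ and your incompressibility argument are both sound and essentially match the paper: a nontrivial circle on the pair of pants $F_i$ is parallel to a boundary component, and capping a compressing disk off with the corresponding sub-annulus of $F_i$ produces either a compressing disk for the torus $\partial N(L_i)$ or a properly embedded disk bounded by $\partial E_i$, which is essential on $\partial H_i$ precisely because both sides of $E_i$ have positive genus. (The paper handles the same two cases slightly differently, via a once-punctured sphere and via pushing the arc $K_1$ into a neighborhood of $E$, but your version is correct.)

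The boundary-incompressibility argument, however, has a genuine gap, and you have flagged it yourself: the closing implication ``if every annulus obtained by boundary-compressing $F_i$ is $\partial$-parallel, then $F_i$ was $\partial$-parallel'' is asserted, not proved, and it is not a standard fact you can cite --- boundary compression is not in general reversible in a way that propagates $\partial$-parallelism back to the original surface. So as written the proof does not close. The paper avoids this entirely by a direct case analysis on where the arc $\beta$ of the $\partial$-compressing disk lies ($\beta$ on the annulus $A_{K_1}$ of $T_K$ versus $\beta$ on $\partial H$), extracting in each case either a compressing disk, a disconnection of the arc $K_1$, or an explicit essential annulus. Your route is salvageable, but by a different observation than the one you propose: the boundary of the pair of pants $F_i$ lies on \emph{two distinct components} of $\partial(H_i\setminus \mathring{N}(L_i))$, namely $\partial H_i$ and $\partial N(L_i)$, so every admissible $\partial$-compressing arc (endpoints on $\partial E_i$, or endpoints on the meridians) produces at least one annulus running from $\partial H_i$ to $\partial N(L_i)$. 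Such an annulus is incompressible (its core is isotopic either to a meridian of $L_i$ or to $\partial E_i$, neither of which bounds a disk, the latter by the positive-genus hypothesis), and since its two boundary circles lie on different boundary components it admits no $\partial$-compression and cannot be $\partial$-parallel; it is therefore essential, contradicting tg-hyperbolicity immediately. The ``all annuli $\partial$-parallel'' horn of your dichotomy is thus vacuous, and the step you defer never needs to be carried out --- but some such argument must actually appear for the proof to be complete.
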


\begin{proof}
We show that $F$ is incompressible and boundary incompressible. The same reasoning immediately applies to $F_1$ and $F_2$, as we only use that $M_1$ and $M_2$ are submanifolds of the hyperbolic manifolds $H_1\setminus \mathring{N}(L_1)$ and $H_2 \setminus \mathring{N}(L_2)$ respectively.

Suppose that $F$ is compressible. Then there is some nontrivial circle $C \subset F$ which bounds a disk $D'$ in $M_1$ or $M_2$. Suppose $D' \subset M_1$ and let $D$ be the disk in $E$ bounded by $C$. Suppose that $D$ is punctured once by $L$. Then the sphere $D \cup D'$ (after pushing so intersections are transverse) is punctured once by $K_1$, a contradiction. Suppose next that $D$ is punctured twice by $L$. Then $K_1$ is contained in the $3$-ball bounded by $D \cup D'$ in $H_{1}$, so $K_1$ can be pushed into a neighborhood of  $E$ by an isotopy fixing the endpoints of $K_1$, and hence $M_1$ contains an essential disk with boundary in $\partial H$ isotopic to $\partial E$, which contradicts that $H_1 \setminus \mathring{N}(L_1)$ is hyperbolic. We reach the analogous contradictions if $D' \subset M_2$, since $H_2 \setminus \mathring{N}(L_2)$ is hyperbolic.

\vspace{0.5 cm}

\noindent Suppose next that $F$ is boundary compressible. Then there is a nontrivial arc $\alpha \subset F$ which together with an arc $\beta \subset \partial H_L$ bounds a disk $D$ in $M_1$ or $M_2$  such that $D \cap F = \alpha$. Suppose $D \subset M_1$. There are two cases.

\vspace{0.5 cm}

\noindent \newline \textbf{Case 1}: The arc $\beta$ is in $A_{K_1}$. If $\beta$ is trivial in $A_{K_1}$, then we can isotope $D$ so that $\partial D \subset F$, yielding a compression disk for $F$, a contradiction. If $\beta$ is nontrivial in $A_{K_1}$, then $K_1$ together with an arc in $F$ bounds a disk in $M_1$.  Thus we can push $K_1$ onto $F$ in $H_L$ through an isotopy fixing the endpoints of $K_1$, which implies that $M_1$ contains an essential disk with boundary isotopic in $\partial H_L$ to $\partial E$, which contradicts that $H_1 \setminus \mathring{N}(L_1)$ is tg-hyperbolic.

\vspace{0.5 cm} 

\noindent \newline \textbf{Case 2:} The arc $\beta$ is in $\partial H$. Suppose $D$ is separating in $H_{1,1}$. Then $D$ separates $M_1$ into two regions, each of which contains an endpoint of $K_1$. Since $K_1$ is connected, this is a contradiction. 

Suppose $D$ is not separating in $H_{1,1}$. The arc $\alpha$ separates an annulus $A$ from $F$ such that $A^* = A \cup D$ is a properly embedded annulus in $H_L$ with one boundary component a meridian on $T_K$ and another boundary component on $\partial H$. Since $D$ is not separating in $H_{1,1}$, $\partial A^* \cap \partial H$ is nontrivial in $\partial H$, thus $A^*$ is an essential annulus in $M_1$, which contradicts that $H_1 \setminus \mathring{N}(L_1)$ is hyperbolic.

\vspace{0.5 cm} \noindent Since $H_2 \setminus \mathring{N}(L_2)$ is hyperbolic, we reach the analogous contradictions if $D \subset M_2$, and thus $F$ is boundary incompressible.
\end{proof}

\begin{lemma}
The manifold $H_L$ is irreducible.
\label{NoSpheresSwitchMove}
\end{lemma}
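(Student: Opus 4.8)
The plan is to prove the stronger statement that every embedded $2$-sphere in $H_L$ bounds a ball. Two ingredients drive the argument. First, by Lemma \ref{SurfaceLemmaSwitchMove} the surface $F$ is separating, two-sided, and incompressible in $H_L$, and it splits $H_L$ as $M_1 \cup_F M_2$. Second, since $H_1 \setminus \mathring{N}(L_1)$ and $H_2 \setminus \mathring{N}(L_2)$ are tg-hyperbolic they are irreducible: a hyperbolic $3$-manifold has contractible universal cover, so $\pi_2 = 0$ and every sphere bounds a ball. Before running the main argument I would first record that the two pieces $M_1$ and $M_2$ are themselves irreducible. Indeed $M_1$ is a component of the irreducible manifold $H_1 \setminus \mathring{N}(L_1)$ cut along the two-sided incompressible surface $F_1$, and likewise $M_2$ is a component of $H_2 \setminus \mathring{N}(L_2)$ cut along $F_2$ (both incompressible by Lemma \ref{SurfaceLemmaSwitchMove}); cutting an irreducible $3$-manifold along a two-sided incompressible surface yields irreducible pieces.

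With this in hand I would run the standard minimal-position argument. Let $S \subset H_L$ be an embedded sphere. Since $S$ is closed and disjoint from $\partial H_L$ while $F$ is properly embedded, after a small isotopy $S$ is transverse to $F$ and $S \cap F$ is a finite disjoint union of circles; isotope $S$ so that $|S \cap F|$ is minimal. Suppose $|S \cap F| \geq 1$ and let $C$ be a circle of $S \cap F$ that is innermost on $S$, bounding a disk $D \subset S$ whose interior is disjoint from $F$. Then $D$ lies in $M_1$ or $M_2$, say $M_1$. Because $F$ is incompressible and $C$ bounds the disk $D$ in $H_L$, the curve $C$ also bounds a disk $D' \subset F$. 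The sphere $D \cup D'$ lies in $\overline{M_1} = M_1$, so by irreducibility of $M_1$ it bounds a ball $B \subset M_1$. Isotoping $D$ across $B$ and just past $F$ into $M_2$ is an isotopy of $S$ that removes the circle $C$ and creates no new intersection circles, contradicting minimality. Hence $S \cap F = \emptyset$, so $S$ lies entirely in $M_1$ or $M_2$, and by irreducibility of that piece $S$ bounds a ball in $H_L$.

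The main obstacle is the supporting claim that $M_1$ and $M_2$ are irreducible; the rest is the routine innermost-disk reduction. If one prefers not to invoke the general cut-along-incompressible-surface principle, this can be argued by hand: any sphere in $M_i = H_{i,i} \setminus \mathring{N}(L_{i,i})$ bounds a ball $B$ in the handlebody $H_{i,i}$, which is irreducible, and one checks that $B$ meets $L_{i,i}$ in nothing. No arc of $L_{i,i}$ can be trapped in $B$, since its endpoints lie on $\partial H_{i,i}$ and the arc cannot cross the interior sphere $\partial B = S$; and no closed component of $L_{i,i}$ can lie in $B$, for then $S$ would separate a knot exterior from a positive-genus piece inside $H_i \setminus \mathring{N}(L_i)$ and bound a ball on neither side, yielding an essential sphere and contradicting tg-hyperbolicity. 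Thus $B \subset M_i$ and $M_i$ is irreducible. A secondary point to verify carefully is that the isotopy across $B$ genuinely lowers $|S \cap F|$: because $C$ is innermost on $S$ the interior of $D$ misses $F$, the pushed copy of $D'$ is disjoint from $F$, and the remaining circles of $S \cap F$ are untouched, so the count drops by exactly one.
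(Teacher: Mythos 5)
Your proposal is correct and follows essentially the same route as the paper: an innermost-circle argument on $S \cap F$ using incompressibility of $F$ from Lemma \ref{SurfaceLemmaSwitchMove}, with the base case ($S \cap F = \emptyset$) handled by viewing the sphere inside the irreducible manifold $H_i \setminus \mathring{N}(L_i)$. The only difference is presentational — you isolate the irreducibility of $M_1$ and $M_2$ as an explicit preliminary step, which the paper leaves implicit in its reference back to the disjoint case.
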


\begin{proof}
Suppose $H_L$ contains an essential sphere $S$. Suppose first that \\ $S \cap F = \emptyset$.  Then $S \subset M_1$ or $S \subset M_2$, which implies that one of  $H_1 \setminus \mathring{N}(L_1)$ or $H_2 \setminus \mathring{N}(L_2)$  contains an essential sphere, a contradiction. 

Suppose next that $S \cap F \neq \emptyset$. We assume that $|S \cap F|$ is minimal among all essential spheres in $H_L$. An innermost circle $C$ of $S \cap F$ in $S$ bounds a disk $D$ in $S$ such that $D \cap F = C$. Since $F$ is incompressible, $C$ bounds a disk $D'$ in $F$. Then we can view $D \cup D'$ as a sphere in $H_{1,1}$ or $H_{2,2}$, which from the last case must bound a ball in $H_L$. Thus we can push $D'$ through $F$ to reduce $|S \cap F|$, contradicting minimality.

\end{proof}

\begin{lemma}
The manifold $H_L$ is boundary irreducible.
\label{NoDisksSwitchMove}
\end{lemma}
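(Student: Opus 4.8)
The plan is to mimic the structure of Lemmas \ref{SurfaceLemmaSwitchMove} and \ref{NoSpheresSwitchMove}: assume $\partial H_L$ is compressible, take a compressing disk $D$ (a properly embedded disk whose boundary is essential in $\partial H_L$), put it in transverse position with $F$, and among all such disks choose one minimizing $|D \cap F|$. I would then remove every component of $D \cap F$ and push $D$ into $M_1$ or $M_2$, where tg-hyperbolicity of $H_1 \setminus \mathring{N}(L_1)$ and $H_2 \setminus \mathring{N}(L_2)$ forbids essential disks.

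First I would dispose of the circle components of $D \cap F$. An innermost such circle $C$ on $D$ bounds a subdisk $D_0 \subset D$ with $D_0 \cap F = C$; since $F$ is incompressible (Lemma \ref{SurfaceLemmaSwitchMove}), $C$ also bounds a disk $D_0'$ in $F$, and $D_0 \cup D_0'$ is a sphere, which bounds a ball by irreducibility of $H_L$ (Lemma \ref{NoSpheresSwitchMove}). Isotoping $D_0$ across this ball removes $C$ without creating new intersections, contradicting minimality; hence $D \cap F$ contains no circles. Next I would handle the arc components. An outermost arc $\alpha$ of $D \cap F$ on $D$ cuts off a subdisk $D_0 \subset D$ with $D_0 \cap F = \alpha$ and $\partial D_0 \setminus \alpha \subset \partial D \subset \partial H_L$; this is precisely a boundary-compressing datum for $F$, so boundary incompressibility of $F$ (Lemma \ref{SurfaceLemmaSwitchMove}) forces $\alpha$ to be boundary-parallel in $F$, and I can isotope $D$ across the parallelism to reduce $|D \cap F|$. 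After these reductions $D \cap F = \emptyset$, so $D$ lies entirely in $M_1$ or in $M_2$, say $M_1$.

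The main obstacle is the final step: $D$ is a properly embedded disk in $M_1 \subset H_1 \setminus \mathring{N}(L_1)$, but I must verify that $\partial D$ remains essential in $\partial(H_1 \setminus \mathring{N}(L_1))$, not merely in $\partial H_L$, before invoking the absence of essential disks. Write $\partial(H_1 \setminus \mathring{N}(L_1)) = \Sigma \cup_{\partial F_1} \Sigma'$, where $\Sigma = \partial M_1 \cap \partial H_L$ is the outer boundary carrying $\partial D$ and $\Sigma'$ is the corresponding piece coming from $H_{1,2}$. If $\partial D$ bounded a disk $\Delta$ in $\partial(H_1 \setminus \mathring{N}(L_1))$, I would examine $\Delta \cap \partial F_1$: an innermost circle of this intersection would exhibit a component of $\partial F_1$ bounding a disk in $\Sigma$ or $\Sigma'$. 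But each component of $\partial F_1$ is essential, the two meridians of $K$ being essential on the torus $\partial N(K)$ and $\partial E_1$ being a separating curve with a surface of positive genus on each side (this is exactly where the hypothesis that all four handlebodies have positive genus enters). Hence $\Delta \cap \partial F_1 = \emptyset$, so $\Delta \subset \Sigma \subset \partial H_L$, contradicting that $\partial D$ is essential in $\partial H_L$. Therefore $\partial D$ is essential in $\partial(H_1 \setminus \mathring{N}(L_1))$ and $D$ is an essential disk there, contradicting tg-hyperbolicity; the symmetric argument applies when $D \subset M_2$. This would establish that $\partial H_L$ is incompressible, i.e.\ that $H_L$ is boundary irreducible.
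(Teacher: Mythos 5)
Your proof is correct, and it reaches the conclusion by a route that differs from the paper's in two ways worth noting. First, the paper splits into cases according to where $\partial D$ lies: for $\partial D \subset \partial N(L)$ it gives a separate ad hoc argument (the boundary must be a longitude of some component, which then bounds a disk and produces an essential sphere, contradicting Lemma \ref{NoSpheresSwitchMove}), and it only runs the intersection-with-$F$ argument when $\partial D \subset \partial H$; you instead treat all compressing disks uniformly by minimizing $|D\cap F|$, which quietly absorbs the torus-boundary case (a longitude of $T_K$ must meet $\partial F$, so the outermost-arc step fires and yields the boundary compression of $F$; a meridian or a curve on a torus of a component disjoint from $F$ lands in $M_1$ or $M_2$ and is killed by tg-hyperbolicity there). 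Second, and more substantively, the paper's argument in the disjoint case is the one-line assertion that a compressing disk in $M_1$ or $M_2$ ``contradicts that $H_1 \setminus \mathring{N}(L_1)$ and $H_2 \setminus \mathring{N}(L_2)$ are hyperbolic,'' which implicitly requires knowing that $\partial D$ stays essential when viewed in $\partial(H_1 \setminus \mathring{N}(L_1))$; your final paragraph supplies exactly this verification, via the observation that no component of $\partial F_1$ bounds a disk in either piece of $\partial(H_1\setminus\mathring{N}(L_1))$ cut along $\partial F_1$ -- and this is a genuine use of the positive-genus hypothesis that the paper leaves tacit. The one place where your write-up is slightly glib is the phrase ``isotope $D$ across the parallelism'': removing a trivial outermost arc is really a surgery producing one of two candidate disks, at least one of which still has essential boundary; the paper's phrasing (``by doing a surgery we could find a compression disk $D''$ with $|D''\cap F|<|D'\cap F|$'') is the honest version, but this is a standard point and not a gap.
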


\begin{proof}

Suppose $\partial H_L$ has a compressing disk $D'$. Suppose first that $\partial D' \subset \partial  N(L)$. Since a handlebody lives in $S^3$, we need only consider the case that the boundary of the disk is a longitude of  %If $ \partial D'$ is a $(1,0)$ curve in some torus component of $\partial N(L)$, then $D' \cup N$ is a sphere in $H$ punctured once by a component of $L$, where $N$ is the disk in $N(L)$ bounded by $\partial D'$, a contradiction.
some torus component of $N(L)$. Then the corresponding component of $L$ bounds a disk in $H$ which does not intersect a distinct component of $L$. Taking the boundary of a regular neighborhood of the disk union the component implies that $H_L$ contains an essential sphere, which we have already eliminated. %{\color{blue} Why only worry about (0,1) and (1,0) curves? Explain.}
\vspace{0.5 cm}

Suppose now that $\partial D' \subset \partial H$. If $D' \cap F = \emptyset$, then one of $\partial H_{1,1}$ or $\partial H_{2,2}$ has a compression disk in $M_1$ or $M_2$ respectively, which contradicts that $H_1 \setminus \mathring{N}(L_1)$ and $H_2 \setminus \mathring{N}(L_2)$ are hyperbolic. Thus we can assume that $D' \cap F \neq \emptyset$, and we further assume that $|D' \cap F|$ is minimal among all compression disks of $\partial H_L$. Then by incompressibility of $F$, the elements of $A \cap F$ are all arcs. By minimality of $|D' \cap F|$, an outermost arc of $D' \cap F$ in $D'$ is then nontrivial in $F$, as otherwise by doing a surgery we could find a compression disk $D''$ of $\partial H_L$ with $|D'' \cap F| < |D' \cap F|$. This gives a boundary compression for $F$, a contradiction.
\end{proof}

\begin{lemma} The manifold $H_L$ does not contain an essential annulus $A$ with $A \cap F = \emptyset$.
\label{NoAnnulionOneSideSwitchMove}
\end{lemma}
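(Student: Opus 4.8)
The plan is to assume, for contradiction, that $A$ is essential in $H_L$ and then to promote $A$ to an essential annulus in one of the two tg-hyperbolic manifolds $H_i \setminus \mathring{N}(L_i)$, which is impossible. Since $A \cap F = \emptyset$ and $A$ is connected, $A$ lies entirely in $M_1$ or in $M_2$; say $A \subset M_1$. Because $A \cap F = \emptyset$, its boundary $\partial A$ lies in $\partial H_L \cap M_1$, and this surface coincides with the portion of $\partial(H_1 \setminus \mathring{N}(L_1))$ contained in $H_{1,1}$. Hence $A$ is in fact a properly embedded annulus in $H_1 \setminus \mathring{N}(L_1)$. I will also use that $F_1$ is incompressible and boundary incompressible in $H_1 \setminus \mathring{N}(L_1)$; this holds by exactly the argument of Lemma \ref{SurfaceLemmaSwitchMove}, which only uses that $M_1$ is a submanifold of the hyperbolic manifold $H_1 \setminus \mathring{N}(L_1)$.

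First I would transfer incompressibility and boundary incompressibility across $F_1$. Suppose $A$ had a compressing disk $D$ in $H_1 \setminus \mathring{N}(L_1)$. Since $\partial D \subset A$ and $A \cap F_1 = \emptyset$, the intersection $D \cap F_1$ consists of interior circles; taking an innermost such circle and using that $F_1$ is incompressible and that $H_1 \setminus \mathring{N}(L_1)$ is irreducible, one removes these circles by isotopy, after which $D \cap F_1 = \emptyset$ and therefore $D \subset M_1 \subset H_L$. This makes $A$ compressible in $H_L$, contradicting that $A$ is essential there. An identical argument, using boundary incompressibility of $F_1$ together with the fact that a boundary-compressing arc $\beta \subset \partial(H_1 \setminus \mathring{N}(L_1))$ is disjoint from the interior surface $F_1$, shows $A$ is boundary incompressible in $H_1 \setminus \mathring{N}(L_1)$. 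Since that manifold is tg-hyperbolic it contains no essential annulus, so $A$ must be boundary parallel in $H_1 \setminus \mathring{N}(L_1)$.

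It remains, and this is the main obstacle, to show that boundary parallelism also transfers across $F_1$, i.e. that $A$ is boundary parallel in $H_L$, contradicting essentiality. Let $W \cong A \times I$ be a parallelism region, so that $\partial W = A \cup A''$ with $A''$ an annulus in $\partial(H_1 \setminus \mathring{N}(L_1))$ and $\partial A'' = \partial A$. The subtlety is that, although $A \cap F_1 = \emptyset$, the curves $\partial F_1$ may cross the annulus $A''$, so a priori $F_1$ can meet the interior of $W$. Choosing $W$ so that $|F_1 \cap W|$ is minimal, I would analyze $F_1 \cap W$ as a properly embedded surface in the solid torus $W$ whose boundary lies on $A''$; since $F_1$ is incompressible and boundary incompressible and $H_1 \setminus \mathring{N}(L_1)$ is irreducible and boundary irreducible, innermost-circle and outermost-arc surgeries remove every component of $F_1 \cap W$ without creating new intersections, contradicting minimality unless $F_1 \cap W = \emptyset$. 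Once $F_1 \cap W = \emptyset$ we obtain $W \subset M_1 \subset H_L$ and $A'' \subset \partial H_L$, so $A$ is boundary parallel in $H_L$.

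I expect the boundary-parallel step to be the crux. Ruling out compressing and boundary-compressing disks disjoint from $F$ is otherwise a direct consequence of the hyperbolicity of the two pieces, but controlling how the product region $W$ meets the cutting surface $F_1$ requires the innermost/outermost cleanup above, leaning on Lemmas \ref{SurfaceLemmaSwitchMove}--\ref{NoDisksSwitchMove} and on the fact that whether $A$ is essential depends only on $M_1$ and the incompressible, boundary-incompressible surface $F_1$, and not on what is glued to $M_1$ along $F_1$.
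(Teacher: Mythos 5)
Your opening moves match the paper's: place $A$ in $M_1$, regard it as a properly embedded annulus in $H_1 \setminus \mathring{N}(L_1)$, and transfer incompressibility across $F_1$ by an innermost-circle argument. But two of your key claims do not hold as stated.

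First, the assertion that a boundary-compressing arc $\beta \subset \partial(H_1 \setminus \mathring{N}(L_1))$ ``is disjoint from the interior surface $F_1$'' is false: $F_1$ is properly embedded, so $\partial F_1$ (the curve $\partial E_1$ on $\partial H_1$ together with two meridian circles on $\partial N(L_1)$) lies exactly on the boundary surfaces where $\beta$ lives, and $\beta$ can cross these curves. For instance, when $\partial A$ consists of meridians of the component of $L_1$ containing the arc $K_1$, the arc $\beta$ can run around that torus through the $M_{1,2}$ side, so $D \cap F_1$ contains arcs, not just circles. This is precisely the situation the paper spends most of its effort on: an outermost arc of $D \cap F_1$ is either trivial in $F_1$ (and can be removed by surgery, using boundary irreducibility) or nontrivial, in which case it does \emph{not} get pushed off but instead produces a boundary compression of $F_1$, contradicting Lemma \ref{SurfaceLemmaSwitchMove}. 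You name boundary incompressibility of $F_1$ as an ingredient, but the disjointness claim would make it unnecessary, and the case analysis it is actually needed for is absent.

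Second, your treatment of the parallelism region $W \cong A \times I$ is misdirected, and here your route genuinely diverges from the paper's. Since $F_1$ is properly embedded and $F_1 \cap A = \emptyset$, one has $F_1 \cap \partial W = \partial F_1 \cap A''$, which consists of \emph{entire} boundary circles of $F_1$ lying in the interior of $A''$; there are no circles or arcs of transverse intersection cutting $F_1 \cap W$ into pieces, so there is nothing to surger innermost or outermost. Because $F_1$ is connected, the dichotomy is simply $F_1 \cap W = \emptyset$ or $F_1 \subset W$, and the latter is ruled out not by surgery but by the observation that $F_1 \subset W$ would force all of $\partial F_1$ into the connected annulus $A''$, whereas $\partial F_1$ meets two distinct components of $\partial(H_1 \setminus \mathring{N}(L_1))$. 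So your conclusion is reachable, but not by the argument you describe. The paper avoids the product region altogether: it notes that a boundary-parallel annulus is automatically boundary compressible, so every non-essential incompressible annulus admits a boundary-compressing disk, and only disks ever need to be moved across $F_1$. That reduction is what makes the proof clean, and I would recommend adopting it rather than repairing the $W$-analysis.
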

\begin{proof} Suppose $H_L$ contains such an annulus, and assume without loss of generality that $A \subset M_1$. %The analogous contradictions are reached if $A \subset M_2$, since we  will not use the fact here that $H_{1,1} \setminus \overline{L}_{1,1}$ is tg-hyperbolic. %{\color{blue} This is confusing as we are assuming $H_{1,1} \setminus \overline{L_{1,1}}$ is hyperbolic, so that favors the $M_1$ side. In this sense, the two sides are not equivalent.} {\color{red} That is true, but it is also not used in most of the arguments, so I think we could just try to be a bit more clear while still keeping this convention. } {\color{blue} Added sentence above. Does that work?} {\color{red} Yep I think its good.} 
We can view $A$ as a properly embedded annulus $\overline{A}$ in $H_1 \setminus \mathring{N}(L_1)$ which we will show is essential $H_1 \setminus \mathring{N}(L_1)$, a contradiction to its being tg-hyperbolic. 

Suppose $\overline{A}$ is compressible in $H_1 \setminus \mathring{N}(L_1)$. Then a nontrivial simple closed curve $\gamma \subset \overline{A}$ bounds a disk $D$ in $H_1 \setminus \mathring{N}(L_1)$. We assume that $|D \cap F_1|$ is minimal among all compression disks of $\overline{A}$ in $H_1 \setminus \mathring{N}(L_{1})$. Note that the components of $D \cap F_1$ are circles. If $D \cap F_1 = \emptyset$, then $D \subset M_1$, which implies that $A$ is compressible in $H_L$, a contradiction. If $D \cap F_1 \neq \emptyset$, by incompressibility of $F_1$, an innermost circle of $D \cap F_1$ in $D$ is trivial in $F_1$, hence by irreducibility of $H_L$, we can reduce $|D \cap F_1|$ by an isotopy, contradicting minimality.

Thus $\overline{A}$ is boundary compressible in $H_1 \setminus \mathring{N}(L_1)$. (Note that if $\overline{A}$ is boundary parallel, then it is boundary compressible.) Therefore, both boundary components of $\overline{A}$ must be on the same 
%There are two cases.
%\vspace{0.5 cm}
%\noindent \newline \textbf{Case 1:} The annulus $A$ has its boundary components $\partial_1 A,\partial_2 A$ on distinct components of $\partial H_L$.  Then $\overline{A}$ is boundary incompressible and not boundary parallel. 
%\vspace{0.5 cm}
%\noindent \newline \textbf{Case 2:} The annulus $A$ has its boundary components on the same
component of $\partial H_L$. We consider two cases.
%By the same reasoning as in Case 1, $\overline{A}$ is incompressible in $H_1 \setminus \mathring{N}(L_1)$. There are two subcases.

\vspace{0.5 cm}

\noindent \newline \textbf{Case 1:}  The annulus $A$ has both boundary components on $\partial H$. Suppose $\overline{A}$ is boundary compressible in $H_1 \setminus \mathring{N}(L_1)$. Then a  nontrivial arc  in $\overline{A}$ together with an arc in $\partial H_1$ bounds a disk $D$ in $H_1 \setminus \mathring{N}(L_1)$ . We assume $|D \cap F_1|$ is minimal among all boundary compressing disks of $\overline{A}$. If $D \cap F_1 = \emptyset$, then $D \subset M_1$, which implies that $A$ is boundary compressible in $H_L$, a contradiction. If $|D \cap F_1|  \neq \emptyset$, by incompressibility of $F_1$ and minimality, the components of $D \cap F_1$ are arcs. An outermost arc in $D$ must be nontrivial in $F_1$, as otherwise, we could find a boundary compression disk $D'$ of $\overline{A}$ in  $H_1 \setminus \mathring{N}(L_{1})$ with $|D' \cap F_1| < |D \cap F_1|$, a contradiction. But then we have a boundary compression disk for $F_1$ in $H_L$, a contradiction to Lemma \ref{SurfaceLemmaSwitchMove}.

%If $\overline{A}$ were boundary parallel in $H_1 \setminus \mathring{N}(L_{1})$, it would be boundary compressible in $H_1 \setminus \mathring{N}(L_1)$, hence from the last case $\overline{A}$ is an essential annulus in  $H_1 \setminus \mathring{N}(L_1)$, a contradiction to it being tg-hyperbolic.
\vspace{0.5 cm}

\noindent \newline \textbf{Case 2:}  The annulus $A$ has both boundary components on $\partial N(L)$. Suppose first that the components $\partial A$ are on a single torus component of $\partial N(L)$ in $M_1$, and that $\overline{A}$ is boundary compressible in $H_1 \setminus \mathring{N}(L_1)$. A  nontrivial arc  in $\overline{A}$ together with an arc in $\partial N(L_1)$ bounds a disk $D$ in $H_1 \setminus \mathring N(L_1)$ . Note that the components of $D \cap F_1$ are circles, thus repeating the minimality argument from Case $1$ it follows that $\overline{A}$ is boundary compressible in $H_L$, a contradiction. 

Suppose next that the components of $\partial A$ are both in $T_K$. Since $A \cap F = \emptyset$, both components of $\partial A$ are $(1,0)$ curves in $T_K$. Suppose $\overline{A}$ is boundary compressible in $H_1 \setminus \mathring{N}(L_1)$, then a nontrivial arc $\alpha$ in $\overline{A}$ together with an arc $\beta \subset T_{K}$ bounds a disk $D$ in $H_1 \setminus \mathring{N}(L_1)$. Again, choose $D$ such that $|D \cap F_1|$ is minimal. 

If $\beta \cap F = \emptyset$, the components of $|D \cap F_1|$ are circles, and thus we reach a contradiction by repeating the minimality argument from Case $1$ and obtaining a boundary compression for $A$ in $H_L$. If $\beta \cap F \neq \emptyset$, then $\beta$ intersects $\partial N(L_1) \cap M_{1,2}$ in at least one arc. Thus, $D$ must intersect $F$ in at least one arc. Choosing an outermost arc on $D$, we obtain a disk in $D \cap M_{1,2}$ with a boundary consisting of two arcs, one a nontrivial arc in $F$ and one in $\partial N(L_1) \cap M_{1,2}$. This contradicts boundary incompressibility of $F_1$. 

%, contradicting minimality. So the arc is nontrivial on $\partial N(K_2)$. Then $\partial (A \cup D \cup N(K))$ contains an essential  sphere that contradicts our elimination of them previously.
%{\color{blue} Rewrote above.} {\color{red} Looks good, I just made some slight notation corrections but this argument is cleaner.}{\color{blue} Rewrote this lemma since incompressibility applied to both cases so moved that up front.}
%hence $H_L$ contains an essential sphere which bounds a $3$-ball containing $K$, a contradiction. 
%{\color{red} I agree that you do get an essential sphere. The disk $D$ lets you contract $A \cup N(K)$ so it does not interact with the other components. I don't see why there is an essential sphere within that boundary specifically. Also need to be careful with notation, this is happening now completely in $H_1 \setminus \mathring{N}(L_1)$, which is assumed hyperbolic, while $K_2$ lives in $H_2\setminus \mathring{N}(L_2)$. We mean the part of $L_1$ on right side of $F_1$ in $H_1$, which I don't think there is a specific notation for. }
\end{proof}

\begin{lemma}
The manifold $H_L$ contains no essential annuli.
\label{NoAnnuliSwitchMove}
\end{lemma}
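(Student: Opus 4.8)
The plan is to reduce to Lemma \ref{NoAnnulionOneSideSwitchMove} by showing that any essential annulus $A \subset H_L$ can be isotoped to be disjoint from $F$. Suppose $A$ is essential and, among all essential annuli in $H_L$, choose $A$ so that $|A \cap F|$ is minimal; if this minimum is $0$ we are done immediately by Lemma \ref{NoAnnulionOneSideSwitchMove}, so assume $A \cap F \neq \es$. The structural fact I would exploit is that $F$ is planar: it is the disk $E$ with two open puncture-neighborhoods removed, hence a thrice-punctured sphere, so every essential simple closed curve in $F$ is parallel to one of its three boundary circles, namely $\partial E$ on $\partial H$ and the two meridians on $T_K$. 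The components of $A \cap F$ are arcs and circles, and I would eliminate them in that order, at each stage contradicting either minimality of $|A \cap F|$ or one of the earlier lemmas.

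For the arcs, I would take an arc of $A \cap F$ outermost on $A$, cutting off a disk $\Delta \subset A$ whose boundary consists of the arc $\alpha \subset F$ together with an arc of $\partial A \subset \partial H_L$, with $\Delta \subset M_1$ or $M_2$. If $\alpha$ is essential in $F$, then $\Delta$ is a boundary-compressing disk for $F$, contradicting Lemma \ref{SurfaceLemmaSwitchMove}; if $\alpha$ is inessential in $F$, then it cobounds a disk in $F$ with a boundary arc, and pushing $A$ across this disk removes $\alpha$ and lowers $|A \cap F|$, contradicting minimality. Hence $A \cap F$ consists only of circles. A circle of $A \cap F$ that is inessential in $F$ and innermost there bounds a disk $D' \subset F$ with interior disjoint from $A$; if its boundary is essential on $A$, then $D'$ is a compressing disk for $A$, contradicting that $A$ is essential, and otherwise $D'$ together with the disk it cuts from $A$ is a sphere bounding a ball by Lemma \ref{NoSpheresSwitchMove}, across which $A$ may be isotoped to lower $|A \cap F|$. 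So every remaining component of $A \cap F$ is a circle essential in $F$.

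By planarity, each such circle is parallel in $F$ to a boundary component $b$ of $F$. Choosing a circle $C$ that cobounds with $b$ an annulus $P \subset F$ whose interior is disjoint from $A$, I would use $P$ to guide an isotopy of $A$ that slides $C$ across $P$ and over $b \subset \partial H_L$, removing $C$ and again lowering $|A \cap F|$, contradicting minimality. This exhausts all components of $A \cap F$, so $A$ may be isotoped off $F$ and Lemma \ref{NoAnnulionOneSideSwitchMove} applies, a contradiction. I expect the main obstacle to be this final step: verifying that the isotopy across $P$ is genuinely supported in $H_L$ and creates no new intersections, with attention to whether $b$ is $\partial E \subset \partial H$ or a meridian of $T_K$, and to the interaction of $\partial A$ with the annuli $A_{K_1}$ and $A_{K_2}$ on $T_K$. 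Establishing the planarity of $F$ and the resulting boundary-parallel classification of its essential curves is the conceptual crux that makes all of these reductions go through.
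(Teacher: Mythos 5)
Your reduction strategy---isotope $A$ off $F$ and invoke Lemma \ref{NoAnnulionOneSideSwitchMove}---does not go through, and the failure point is exactly the one you flag at the end: the elimination of circles of $A \cap F$ that are essential in $F$. Such a circle $C$ is parallel in the planar surface $F$ to a boundary component $b$ of $F$ (either $\partial E \subset \partial H$ or a meridian of $T_K$), but sliding $C$ across the parallelism annulus $P$ only moves $C$ closer to $b$; to actually remove the intersection you would have to push an interior annular neighborhood of $C$ in $A$ past $b$, i.e.\ through $\partial H_L$, which is not an isotopy of a properly embedded surface. These essential circles are not removable, and this is precisely why the paper's proof cannot take your route: instead it keeps the intersection and performs cut-and-paste surgery, gluing the once- or twice-punctured disk $D \subset E$ bounded by $C$ (or the complementary annulus $F \setminus D$) onto the subannulus of $A$ cut off by $C$, producing a new properly embedded annulus or M\"obius band lying entirely in $M_1$ or $M_2$ whose essentiality is then checked by hand and contradicted via Lemma \ref{NoAnnulionOneSideSwitchMove}, irreducibility, or boundary-irreducibility. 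A parallel gap occurs in your arc step: an arc of $A \cap F$ that is essential in $A$ (running between the two boundary circles of $A$) does not cut off a disk from $A$, so when \emph{all} components of $A \cap F$ are such arcs there is no outermost disk $\Delta$ to work with. The paper handles this configuration (Cases 1b and the $(p,q)$-curve part of 2b) by cutting $A$ along $F$ into rectangles, capping a rectangle with a rectangle in $F$ to form an annulus or M\"obius band in $M_1$, and again contradicting Lemma \ref{NoAnnulionOneSideSwitchMove}.

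Your preliminary steps (minimality of $|A\cap F|$, removal of arcs and circles that are inessential in $F$ or in $A$ using Lemmas \ref{SurfaceLemmaSwitchMove} and \ref{NoSpheresSwitchMove}) are correct and match the paper's setup, and the observation that $F$ is a thrice-punctured sphere so its essential curves are boundary-parallel is genuinely the right structural input---the paper uses it implicitly when it classifies the disk $D \subset E$ bounded by an intersection circle as once- or twice-punctured. But the case analysis over where $\partial A$ lies ($\partial H$ versus the various components of $\partial N(L)$, and $(1,0)$ versus $(p,q)$ curves on $T_K$), together with the surgery constructions replacing your impossible isotopies, is the actual content of the lemma and is missing from the proposal.
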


\begin{proof}
Suppose $H_L$ contains an essential annulus $A$. We assume that $|A \cap F|$ is minimal among all essential annuli in $H_L$. From Lemma \ref{NoAnnulionOneSideSwitchMove}, we can assume that $A \cap F \neq \emptyset$. There are three cases.
\vspace{0.5 cm}
\noindent \newline \textbf{Case 1:} The annulus $A$ has boundary components  $\partial_1 A,\partial_2 A$ in $\partial H$. By minimality and incompressibility and boundary incompressibility of $F$, the components of $A \cap F$ are all either nontrivial circles in $A$ and $F$ or all nontrivial arcs in $A$ and $F$.
\vspace{0.5 cm}

\noindent \newline (1a) The components of $A \cap F$ are all nontrivial circles in $A$ and $F$. Then up to isotopy, the boundary components $\partial_1 A, \partial_2 A$ do not intersect $F$.  Suppose some component of $\partial A$, say $\partial_1 A$, is in $M_1$. Then a circle $C$ in $A \cap F$  together with $\partial_1 A$ bounds an annulus $A^* \subset A$ in $M_1$   such that $A^* \cap F = C$. 

Let $D$ denote the disk in $E$ bounded by $C$. Suppose $D$ is punctured once by $L$. Then $H_L$ contains a properly embedded once-punctured disk $D \cup A^*$ which can be pushed off $E$ to yield an essential annulus in $M_1$, contradicting Lemma \ref{NoAnnulionOneSideSwitchMove}. 

Suppose $D$ is punctured twice by $L$. Then we can slide $C$ along $E$ out to $\partial H$. Hence we obtain an annulus $A^{**}$ that is entirely contained in $M_1$.

%{\color{blue} $H_{1,1} \setminus \overline{L}_{1,1}?$}

\begin{comment} {\color{blue} Can't we just push $C$ out to $\partial H_1$ and get contradiction to $H_1 \setminus L_1$ being tg-hyperbolic? Don't need complement of $\overline{L}_{i,i} hyperbolic.$}Then we can push the endpoints of $K_1$ off of $D$ into $H_{1,1}$ and connect them to obtain $\overline{L}_{1,1}$. Since $H_{1,1} \setminus \mathring{N}(\overline{L}_{1,1})$ is hyperbolic, {\color{blue} using hyperbolicity of $H_{1,1} \setminus \mathring{N}(\overline{L}_{1,1})$ right here. If necessary, looks like we would need same for $H_{2,2} \setminus \mathring{N}(\overline{L}_{2,2})$.;}this implies that $\partial (A \cup D) = \partial_1 A$ is isotopic in $\partial H$ to $\partial E$, as otherwise $\partial H_{1,1}$ has a compression disk in $H_{1,1} \setminus \mathring{N}(\overline{L}_1)$. Thus $\partial_1 A$ and $C$ together bound an annulus $A'$ in $\partial H_{1,1}$. 
\end{comment}

So,  $A^{**}$ is a properly embedded annulus in $M_1$, which is incompressible since $\partial_1 A, C$ are nontrivial in $A$. Hence by Lemma \ref{NoAnnulionOneSideSwitchMove}, it is boundary compressible in $H_L$ and both boundary curves are on $\partial H_{1}$. 

Doing the boundary compression on $A^{**}$ yields a disk with boundary on $\partial H_{1}$. If the boundary of the disk is trivial on $H_1$, as happens when the two boundaries of $A^{**}$ are parallel on $\partial H_{1}$, then we can form a sphere from the disk and another disk on $\partial H_1$.  Irreducibility of $H_L$ implies we can then isotope $A$ to lower the number of intersections with $F$, a contradiction. 

If the boundary of the disk is nontrivial on $H_1$, we contradict boundary irreducibility of $H_L$.

\vspace{0.5 cm } 

\noindent \newline (1b) The components of $A \cap F$ are nontrivial arcs in both $A$ and $F$. Then $A$ is cut by $F$ into disks in $M_1$ and $M_2$ with boundaries that consist of  two opposite sides in $F$ and two opposite sides in $\partial H$. Let $D_1 \subset M_1$ be one such disk. Let $R \subset F$ be a rectangle such that two opposite sides of $R$ are the components of $D_1 \cap F$, and the other two sides are disjoint curves in $\partial E$. Then $D_1 \cup R$ is either a properly embedded M\"obius band $Q$ or a  properly embedded annulus $A_1 \subset M_1$ in $H_L$. 

We begin with the case it is an annulus, which we claim is essential in $H_L$.  By minimality of  $|A\cap F|$, $A_1$ is incompressible, as otherwise we could push $D_1$ through $F$. 

Suppose $A_1$ is boundary compressible in $H_L$. Then a nontrivial arc $\alpha \subset A_1$ bounds a disk $D$ in $H_L$ with an arc $\beta \subset \partial H$. We suppose $|D \cap F|$ is minimal among all boundary compression disks of $A_1$ in $H_L$. By minimality and incompressibility of $F$, the components of $D \cap F$ are arcs. Up to isotopy we can assume that $\alpha \subset D_1$ or $\alpha \subset R$. In the former case $D$ provides a boundary compression of $A$, a contradiction. Suppose now that $\alpha \subset F$. If $D$ does not intersect $F$ in an arc distinct from $\alpha$, then $D$ provides a boundary compression of $F$, a contradiction. If $D \cap F \neq \emptyset$, then an outermost arc in $D$ of $D \cap F$ is nontrivial in $F$, as otherwise by doing a surgery we could find a boundary compression disk $D'$ of $A_1$ along $\alpha$ with $|D' \cap F| < |D \cap F|$. This yields a boundary compression of $F$, a contradiction. If $A_1$ were boundary parallel in $H_L$, it would be boundary compressible, hence $A_1$ is an essential annulus in $H_L$ contained in $M_1$, which contradicts Lemma \ref{NoAnnulionOneSideSwitchMove}.

Suppose now that $D_1 \cup R$ is a M\"obius band $Q$. Then the boundary of a regular neighborhood of $Q$ is an annulus $A_2$. It cannot compress in the regular neighborhood of $Q$ since that is a solid torus, and the boundaries of $A_2$ are isotopic to twice the core curve of the solid torus. It cannot compress to the outside of the regular neighborhood of $Q$ because either component of the boundary of the annulus links the core curve of the annulus, due to the twisting of the M\"obius band. If the core curve bounded a disk, that disk would not intersect the boundary curves of the annulus, which would contradict the linking. 
  %{\color{blue} Check this.} 
And it is boundary incompressible for the same reasons that $A_1$ is, also contradicting Lemma \ref{NoAnnulionOneSideSwitchMove}. %{\color{blue} Check this.}
%{\color{red} I think I mostly get it, a bit hard for me to visualize I will think about it more.}{\color{red}  Have thought about this and am pretty convinced.}

\vspace{0.5 cm}
\noindent \newline \textbf{Case 2:} The annulus $A$ has boundary components $\partial_1 A$ and $ \partial_2 A$ on $\partial N(L)$. There are two subcases.
\vspace{0.5 cm}
\noindent \newline (2a) Both $\partial_1 A$ and $\partial_2 A$ lie on the torus components $T_{K_{1,i}}$ and $T_{K_{2,j}}$  where $T_{K_{1,i}}$ is a torus component of $\partial N(L)$ contained completely in $M_1$, and  $T_{K_{2,j}}$ is a torus component of $\partial N(L)$ contained completely in $M_2$. By minimality of $|A \cap F|$ and incompressibility of $F$, the components of $A \cap F$ are circles which are nontrivial in both $A$ and $F$. A circle $C$ in $A \cap F$ bounds a subannulus $A^*$ of $A$  with $\partial_1 A$ such that $A^* \cap F = C$ which is incompressible since $C$ and $\partial_1 A$ are nontrivial in $A$. 

Suppose $A^* \subset M_1$.
Let $D$ denote the disk in $E$ bounded by $C$. If $D$ is punctured once, we can take the union of it with $A^*$, and then $H_L$ contains an essential annulus in $M_1$ with one boundary component on $T_{K_{1,i}}$  and another boundary component on $T_{K}$.  If $D$ is punctured twice, we can glue the annulus $F \setminus D$ to $A^*$ to obtain an annulus essential in $H_L$ and contained in $M_1$ with one boundary component on $T_{K_{1,i}}$ and the other boundary component on $\partial H$. Both cases contradict Lemma \ref{NoAnnulionOneSideSwitchMove}. We reach the analogous contradictions if $A^* \subset M_2$. %{\color{blue} Added some detail here.}{\color{red}  Looks good.}
\vspace{0.5 cm}

\noindent \newline (2b) The annulus $A$ has at least one boundary component $\partial_1 A$ on $K$. Suppose first that $\partial_1 A$ is a $(1,0)$ curve in $T_{K}$. Then $\partial_2 A$ is either a $(1,0)$ curve in $T_{K}$ or lies in some $T_{K_{1,i}}$ or $T_{K_{2,j}}$.  By minimality of $|A \cap F|$ and incompressibility of $F$, the components of $A \cap F$ are circles which are nontrivial in $A$ and $F$. A circle $C$ in $A \cap F$ bounds a subannulus $A^*$ of $A$  with $\partial_1 A$ such that $A^* \cap F = C$. Note $A^*$ is incompressible since $C$ and $\partial_1 A$ are nontrivial in $A$. 

Suppose, without loss of generality, that $A^* \subset M_1$. Let $D$ denote the disk in $E$ bounded by $C$. Suppose first that $D$ is punctured once. Then we obtain a new annulus $A'^*$ by gluing $D$ onto $A^*$, with both boundaries now meridians on $T_K$. 
We can view $A'^*$ as a properly embedded annulus in $M_1$ which is boundary compressible in $H_L$ by Lemma \ref{NoAnnulionOneSideSwitchMove}. 

By irreducibility of $H_L$, the annulus must be boundary parallel. If it is boundary parallel to the $M_1$ side of $H_L$, then we can use that to isotope $A$ along $T_K$ and reduce its number of intersection curves with $F$, a contradiction to minimality. It cannot be boundary parallel to the other side as the boundary of the handlebody is to that side.

%{\color{blue} Rewrote the above.}{\color{red} Looks good.}

%A boundary compressing arc of $A'^*$ is either contained in $A_{K_{1}}$ or intersects $A_{K_{2}}$ in a nontrivial arc. The second case implies that $H_L$ has an essential sphere containing $T_{K}$, a contradiction.{\color{blue} Why?} Thus, the first case must hold, and we can push $A^*$ through $F$, lowering the number of intersection curves, and contradicting minimality. 

If $D$ is punctured twice, then $H_L$ contains an essential annulus in $M_1$ with one boundary component on $T_{K}$ and the other boundary component on $\partial H$. this contradicts Lemma \ref{NoAnnulionOneSideSwitchMove}. \\

 Suppose next that $\partial_1 A$ is a $(p,q)$-curve in $T_{K}$ with $|q| > 0$. If $\partial_2 A \subset T_{K}$, then all components of $A \cap F$ are nontrivial arcs in $A$. If there is an  innermost arc of $A \cap F$ in $F$ that is trivial in $F$, then $A$ is boundary compressible, contradicting its essentiality.

So all arcs in $A \cap F$ are nontrivial and parallel on $F$. Each component of $A \cap M_1$ is a disk with boundary consisting of four arcs, two in $\partial N(K)$ and two in $F$. Let $D$ be one of them. The two arcs on its boundary in $F$ cut a disk $D'$ from $F$ that has two arcs on its boundary also in $\partial N(K)$. Then $D \cup D'$ is either a properly embedded M\"obius band $Q$ or an annulus $A'$. We consider the annulus possibility first.  %Let $A'$ be the annulus obtained by taking the union of $D$ with $D'$.

If $A'$ is compressible, then we can use the compression disk together with half of $A'$ to obtain a disk with boundary consisting of two arcs, one in $F$ and one in $\partial N(K).$ But this contradicts the boundary-incompressibility of $F$.

If $A'$ is boundary compressible by a disk $D''$, we can take the arc in $D'' \cap A'$ to be in $D' \subset F$, therefore obtaining a boundary compression of $F$. So $A'$ is a essential annulus that does not intersect $F$.
Therefore the existence of $A'$ contradicts Lemma \ref{NoAnnulionOneSideSwitchMove}.

If $D \cup D'$ is a M\"obius band $Q$, then the boundary of $Q$ must be a meridian on $T_K$ as it is entirely contained in $M_1$ and cannot be trivial as then we would have a projective plane embedded in $M_1$ which we could embed in $S^3$, a contradiction.

The boundary of a regular neighborhood of $Q$ is an annulus $A''$. It is incompressible to the inside of the regular neighborhod of $Q$ as that is a solid torus, with the core curve of the annulus going around the core curve of the solid torus twice. It is incompressible to the outside as the boundaries are meridian curves on $T_K$.  It is boundary incompressible as any boundary compression would yield a boundary compression for $F$, a contradiction. So again, the existence of an essential annulus $A''$ that misses $F$ contradicts Lemma \ref{NoAnnulionOneSideSwitchMove}.%{\color{blue} Check added details here.}

%{\color{blue} Rewritten. Check the above.} {\color{red} Looks good, do we want to mention the Mobius Strip as a possibility?}{\color{blue} Good point! I have reworded.}

\medskip

Suppose $\partial_2 A$ is in some $T_{K_{1,i}}$. % The idea here is this implies K_2 is not linked with anything and cant pass through any holes, as this would kill the annulus 
%{\color{blue} Added following sentence.} {\color{red} Looks good.}
Then there must be an intersection arc in $A \cap F$ that cuts a disk from $A$ with one boundary in $F$ and the other boundary in $\partial N(K_2)$. We can use it to push $K_{2}$ onto $E$ by an isotopy in $H_L$ fixing the endpoints of $K_{2}$.  This implies that $H_L$ contains a compressing disk in $M_2$ with boundary isotopic in $\partial H$ to $\partial E$. We reach the analogous contradiction if  $\partial_2 A$ is in some $T_{K_{2,j}}$.

\vspace{0.5 cm}
\noindent \newline \textbf{Case 3:} The annulus $A$ has a boundary component $\partial_1 A$ on $\partial N(L)$ and a boundary component $\partial_2 A$ on $\partial H$. 
\vspace{0.5 cm}

Let $J$ be the component of $L$ with regular neighborhood boundary that $A$ intersects. Then the boundary of a regular neighborhood of $A \cup \partial N(J)$ is an annulus $A'$ with both of its boundaries in $\partial H$. The boundaries of $A'$ are two parallel nontrivial curves on the boundary of $H$ that are also parallel to the one boundary of $A$ on $\partial H$. Thus $A'$ must be incompressible. 

If $A'$ is boundary compressible, then do the boundary compression on the annulus $A'$ to obtain a disk $D''$  with boundary in $\partial H$. By boundary-irreducibility of $H_L$, $D''$ would have to have trivial boundary in $\partial H$. The boundary compression has the impact on $\partial A'$ of surgering the two curves along an arc running from one to the other.  Surgering two nontrivial parallel curves on a surface of genus at least two along an arc that is not in the annulus between the curves yields a nontrivial curve. So the boundary compression cannot be to that side. Thus the boundary compression must be to the side of the annulus in $\partial H$ shared by the two curves. But this side is a solid torus missing its core curve $J$, preventing a boundary compression to that side. So $A'$ is an essential annulus in $H_L$ with both boundaries on $\partial H$, contradicting Case 1.

\end{proof}

\begin{lemma}
The manifold $H_L$ contains no essential torus.
\label{NoToriSwitchMove}
\end{lemma}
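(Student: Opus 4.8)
The plan is to suppose, for contradiction, that $H_L$ contains an essential torus $T$, chosen so that $|T \cap F|$ is minimal among all essential tori in $H_L$, and to rule it out by converting it into one of the annuli already eliminated. First I would treat the case $T \cap F = \emptyset$, where $T$ lies entirely in $M_1$ or $M_2$, say $M_1$. Viewing $T$ as a torus $\overline{T}$ in $H_1 \setminus \mathring{N}(L_1)$, I would show it is incompressible there by the innermost-circle argument used at the start of Lemma \ref{NoAnnulionOneSideSwitchMove}: any compressing disk can be isotoped, using incompressibility of $F_1$ (Lemma \ref{SurfaceLemmaSwitchMove}) and irreducibility of $H_L$ (Lemma \ref{NoSpheresSwitchMove}), to be disjoint from $F_1$, placing the compression in $M_1 \subset H_L$ and contradicting incompressibility of $T$. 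Since $H_1 \setminus \mathring{N}(L_1)$ is tg-hyperbolic, hence atoroidal, $\overline{T}$ must then be boundary parallel, cobounding a product region $W \cong T^2 \times I$ with a torus component $T'$ of $\partial N(L_1)$.

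The point to verify in this case is that the parallelism occurs inside $M_1$, so that $T$ is boundary parallel in $H_L$ and therefore not essential. Since $T \cap F_1 = \emptyset$ and $\partial F_1$ meets $\partial N(L_1)$ only in the two meridians of the twice-punctured component, $F_1$ is disjoint from any torus component $T'$ contained in $M_1$; as $F_1$ also meets $\partial H$ and so is not contained in $W$, it follows that $W \cap F_1 = \emptyset$ and hence $W \subset M_1$, giving the desired contradiction. The remaining possibility, that $T'$ is the torus of the twice-punctured component and $W$ crosses $F_1$, I would exclude using incompressibility and boundary incompressibility of $F_1$: an essential intersection of $F_1$ with $T^2 \times I$ would be forced to be a horizontal or vertical surface, neither of which is compatible with the pair-of-pants topology of $F_1$ and the fact that $\partial F_1$ touches only one side of $W$.

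For the case $T \cap F \neq \emptyset$, incompressibility of both $F$ and $T$ makes every component of $T \cap F$ essential on each surface. On the torus these curves are mutually parallel and cut $T$ into annuli lying alternately in $M_1$ and $M_2$; since $F$ is a pair of pants, each such curve is parallel on $F$ either to $\partial E$ or to one of the two meridians, both of which are meridians of $K$. I would then analyze a sub-annulus $A^*$ of $T$ according to the types of its two boundary curves. If the two boundary curves of $A^*$ are of the same type, they cobound an annulus $R$ in $F$, and $A^* \cup R$ is a torus that, after pushing $R$ off $F$, either is essential and disjoint from $F$ (returning us to the first case) or lets us isotope $T$ across $R$ to lower $|T \cap F|$, contradicting minimality. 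If the two boundary curves are of different types, both are parallel on $F$ to meridians of $K$, so isotoping them across the intervening annuli of $F$ onto $T_K$ converts $A^*$ into an essential annulus contained in $M_1$ (or $M_2$) with both boundaries on $T_K$ and disjoint from $F$, directly contradicting Lemma \ref{NoAnnulionOneSideSwitchMove}. As in Lemma \ref{NoAnnuliSwitchMove}, I would also account for the degenerate possibility that a capped piece is a M\"obius band rather than an annulus, passing to the boundary of its regular neighborhood.

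I expect the main obstacle to be the bookkeeping in this last case: one must track which boundary component of $F$ each intersection curve is parallel to as one proceeds around $T$, keep the innermost and outermost choices consistent so that the isotopies genuinely reduce $|T \cap F|$, and verify in the different-type subcase that the produced annulus is truly essential---incompressible, boundary incompressible, and not boundary parallel into $T_K$---rather than a trivial annulus parallel to $A_{K_1}$, which would instead yield a reduction of $|T \cap F|$. By contrast, the disjoint case is comparatively routine once the boundary-parallel subtlety of the second paragraph is dispatched, and each essential annulus produced is immediately contradicted by Lemma \ref{NoAnnulionOneSideSwitchMove} or Lemma \ref{NoAnnuliSwitchMove}.
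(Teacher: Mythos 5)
Your overall strategy matches the paper's: minimize $|T\cap F|$, handle the disjoint case by viewing $T$ inside $H_1\setminus \mathring{N}(L_1)$ and using tg-hyperbolicity there, and in the intersecting case cut $T$ by $F$ into annuli whose boundary curves are essential on the pair of pants $F$ and hence parallel to $\partial E$ or to one of the two meridians. The disjoint case and the ``different types'' endgame (one boundary curve around each meridian; cap with the two once-punctured disks to get an annulus with meridional boundary on $T_K$ that is either essential, contradicting Lemma \ref{NoAnnulionOneSideSwitchMove}, or parallel to $A_{K_1}$, which lets you reduce $|T\cap F|$ or exhibits $T$ as boundary-parallel) are essentially the paper's argument.

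There are, however, two gaps in the intersecting case. First, your case split omits the mixed possibility that one boundary curve of $A^*$ is parallel to $\partial E$ (bounding a twice-punctured disk of $E$) while the other is parallel to a meridian (bounding a once-punctured disk); you simply assert that ``different types'' forces both curves to be meridian-parallel. This mixed case is possible a priori, and the paper eliminates it by a parity argument: capping $A^*$ with the two disks would produce a sphere meeting $L$ in three points, which is impossible. Second, and more seriously, in the ``same type'' case your dichotomy --- the torus $A^*\cup R$ is either essential (reducing to the disjoint case) or ``lets us isotope $T$ across $R$'' --- is not a genuine dichotomy. A non-essential torus is compressible or boundary-parallel, and neither conclusion hands you a product region between $A^*$ and $R$: the torus $A^*\cup R$ could bound a solid torus around whose core $A^*$ winds $p\ge 2$ times, or compress into a knotted region, and in those situations there is no isotopy of $T$ across $R$ lowering $|T\cap F|$. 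Excluding such configurations needs a further argument that you have not supplied. The paper sidesteps this entirely by capping the sub-annuli not with $R\subset F$ but with the relevant sub-disks or sub-annuli of $E$ and $F$, producing properly embedded annuli with boundary on $\partial H$ (when both curves are parallel to $\partial E$) or with meridional boundary on $T_K$ (when both are parallel to the same meridian), and then invoking Lemmas \ref{NoAnnulionOneSideSwitchMove} and \ref{NoAnnuliSwitchMove} together with irreducibility to force either a contradiction or a reduction of $|T\cap F|$. You would need to replace your torus dichotomy with an argument of this kind for the proof to close.
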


\begin{proof}

Suppose $H_L$ contains an essential torus $\mathcal{T}$. We assume that $|\mathcal{T} \cap F|$ is minimal among all essential tori in $H_L$.

Suppose first that $\mathcal{T} \cap F = \emptyset$. Then $\mathcal{T} \subset M_1$ or $\mathcal{T} \subset M_2$. For convenience, we assume $\mathcal{T} \subset M_1$. Then we can view $\mathcal{T}$ as a torus $\overline{\mathcal{T}}$ in $H_1 \setminus \mathring{N}(L_{1})$ which we show is essential. 

Suppose $\overline{\mathcal{T}}$ is boundary parallel in $H_1 \setminus \mathring{N}(L_{1})$. Since $\partial H_1$ has genus at least $2$, $\overline{\mathcal{T}}$ must be parallel to a component of $\partial N(L_1)$. If it is boundary parallel to a component $J$, then $\overline{\mathcal{T}}$ must separate a solid torus from $H_1$ that has $J$ as its core curve. Since $F$ is to the side of $\overline{\mathcal{T}}$ that $H$ is, the solid torus cannot intersect $F_1$ either. So both the solid torus and $J$ are in $M_1$, and $\overline{T}$ is boundary parallel in $H_L$, contrary to our assumption.

Suppose $\overline{\mathcal{T}}$ is compressible in $H_1 \setminus \mathring{N}(L_{1})$.  Then a nontrivial curve $\gamma \subset \overline{\mathcal{T}}$ bounds a disk $D$ in $H_1 \setminus \mathring{N}(L_{1})$. We assume that $|D\cap F_1|$ is minimal among all compression disks of $\overline{\mathcal{T}}$ in $H_1 \setminus \mathring{N}(L_{1})$. Note that the components of $D \cap F$ are circles.  

If $D \cap F_1 = \emptyset$, then $D \subset M_1$, which implies that $\mathcal{T}$ is compressible in $H_L$, a contradiction. If $D \cap F_1 \neq \emptyset$, by incompressibility of $F$, an innermost circle of $D \cap F_1$ in $D$ is trivial in $F_1$, hence by irreducibility of $H_L$, we can reduce $|D \cap F_1|$ by an isotopy, contradicting minimality. It follows that $\overline{\mathcal{T}}$ is essential in $H_1 \setminus \mathring{N}(L_{1})$, which contradicts that $H_1 \setminus \mathring{N}(L_{1})$ is hyperbolic. Since $H_2 \setminus \mathring{N}(L_{2})$ is hyperbolic, we reach the analogous contradictions if $\mathcal{T} \subset M_2$.

\medskip

Suppose next that $\mathcal{T} \cap F \neq \emptyset$. By minimality of $|\mathcal{T} \cap F|$ and incompressibility of $F$, the components of $\mathcal{T} \cap F$ are circles which are nontrivial in $\mathcal{T}$ and $F$. 
%{\color{blue} There could be lots of circles, not just two, so adding a bit here.}

Let $A_C$ be an annulus which is a connected component of $M_1 \cap \mathcal{T}$ with boundary two circles in $F \cap \mathcal{T}$.
We claim the boundaries of $A_C$ are two disjoint circles $C_1$ and $C_2$ which bound disjoint disks in $E$ punctured once by $L$. Suppose otherwise. Then two circles $C_1,C_2 \subset A_C \cap F$ bound disks $D_1,D_2 \subset E$ such that $D_2 \subset D_1$. %We can assume that $C_1,C_2$ bound an annulus $A_C \subset \mathcal{T}$ such that $A_C \cap F = C_1 \cup C_2$. 
If $D_2$ is punctured once and $D_1$ is punctured twice by $L$, then we can glue $D_2$ and a slightly moved $D_1$ to $A_C$ to obtain a sphere in $H$ that is punctured three times by $L$.  Thus $D_1,D_2$ are both punctured once or twice by $L$.

 Suppose $D_1$ and $D_2$ are both punctured twice. Then by adding the annuli in $F \setminus D_i$ to $A_C$, we obtain an annulus $A'_C$ with boundary in $\partial H$. By the same reasoning as in the proof of Case $1$ in the proof of Lemma \ref{NoAnnuliSwitchMove}, $A'_C$ is boundary compressible in $M_1$ and we can push $A_C$ through $F$ to reduce $|A \cap F|$, contradicting minimality. 
 
 Suppose $D_1$ and $D_2$ are both punctured once. The circles $C_1$ and $C_2$  bound an annulus $A_{C,F}$ in $F$ which is not punctured by $L$. 
 
 By gluing the punctured disks $D_1$ and $D_2$ onto $A_C$, and sliding the $D_1$ portion just off $F$, we obtain a new annulus $\overline{A}_C$ with boundaries on $A_{K_1}$. 
 %{\color{blue} Lost here. I assume this does not mean to isotope $A_C$ onto $A_{K_1}$. Maybe only isotoping the boundary. Probably better just to glue the two disks onto $A_C$ and push one off the other to obtain the annulus you mean. Do I have this right?} 
 %{\color{red} Yeah that is correct, what I mean is to just slide the boundary components of $A_C$ down onto $A_{K_1}$.}{\color{blue}See rewrite.} %keeping the boundary components of $A_C$ on $\partial H_{1,1} \cup A_{K_1}$.
 This annulus $\overline{A}_C \subset M_1$ is properly embedded in $H_L$ with $\partial \overline{A}_C \subset T_{K}$. The  boundaries of $\overline{A}_C$ are meridians on $T_{K}$ that bound an annulus $A'_{C, F} \subset A_{K_1}$ which is obtained from $A_{C,F}$ by an isotopy in $M_1$. 
 Note $\overline{A}_C$ is incompressible in $H_L$ as $A_C$ is incompressible, and hence by Lemma \ref{NoAnnulionOneSideSwitchMove} it is boundary compressible in $H_L$. Thus a nontrivial arc $\alpha$ in $\overline{A}_C$ bounds a disk $D_{\beta}$ in $H_L$ with an arc $\beta \subset T_{K}$. 
 
 If $\beta$ is not a nontrivial arc in $A'_{C,F}$, it intersects $A_{K_2}$ in a nontrivial arc. In that case $D_{\beta}$ becomes a compressing disk for the torus $\overline{A}_C \cup (T_K \setminus A'_{C,F}$. Doin the compression yields a sphere in $H_L$ that separates $K$ from $\partial H$, a contradiction to irreducibility of $H_L$.
 %which implies that $H_L$ contains an essential sphere which bounds $3$-ball in $H$ containing $K$. {\color{blue} Not sure I see this. How are you getting a sphere? Taking regular neighborhood of $D_{\beta} \cup \overline{A}_C \cup N(K)$? Need more explanation here.} {\color{red} Yes, something like that should work. The picture I had in my head is that the existence of such a disk means $K$ can be separated from the other components by a sphere. Maybe we could instead say that doing the boundary compression for $\overline{A}_C$ in this case would yield a compression disk with boundary along $K$?} 
 
 If $\beta$ is not a nontrivial arc in $A'_{C,F}$, the disk $D_{\beta}$ lies in the region contained in $M_1$ that $\overline{A}_C$ separates from $H_L$. We can thus push $D_{\beta}$ by an isotopy to obtain a boundary compression disk for $A_C$ in $M_1$, hence $A_C$ is boundary compressible in $M_1$ and boundary parallel (since the boundary compressing arc in $M_1$ is a nontrivial arc in $A_{C,F}$) and we can push it through $F$ to reduce $|A \cap F|$, a contradiction. %{\color{blue} Check my rewrite above about the sphere. Sound okay?} {\color{red} Yep looks correct.}
 \newline
 
 We reach the analogous contradictions if $A_C \subset M_2$. Thus, we can assume the boundaries of $A_C$ are %components of $\mathcal{T} \cap F$ are 
 two disjoint circles which bound disjoint disks in $E$ punctured once by $L$. 

If there were more than one such annulus in $M_1$ and one such in $M_2$, then following along the annuli, one after the other as we travel along a longitude of $\mathcal{T}$, we would have to have them cycle one inside the next as they pass through $F$, and the torus could never close up.  So there is only one to each side of $F$ and $\mathcal{T}$ is cut into two incompressible (since the elements of $\mathcal{T} \cap F$ are nontrivial in $\mathcal{T}$) annuli $\mathcal{A}_1 \subset M_1, \mathcal{A}_2 \subset M_2$.

 If we glue the punctured disks $D_1$ and $D_2$ to $\mathcal{A}_1$ we obtain an incompressible annulus, which must then be boundary parallel to $\partial N(K)$ by Lemma \ref{NoAnnuliSwitchMove}. The same holds for $\mathcal{A}_2$, implying the torus $\mathcal{T}$ is boundary parallel, a contradiction to its being essential.

%{\color{blue} Now drop next paragraph. Let me know what you think.}{\color{red} Yep looks good! This is what I meant by $\mathcal{A}_1$ being boundary compressible.}

%If $\mathcal{A}_1$ is boundary compressible, {\color{blue} A little confused here. $\mathcal{A}_1$ is not a properly embedded annulus in $H_L$, only in $M_1$.}the boundary compressing arc in $\partial H_L$ must be contained in $M_1$,    {\color{blue} Same here. I don't see the sphere.} In particular, the boundary compressing arc is contained in the region in $M_1$ that $\mathcal{A}_1$ separates from $H_L$, hence $\mathcal{A}_1$ is boundary parallel. The same reasoning applies to $M_2$, hence if both of $\mathcal{A}_1,\mathcal{A}_2$ are boundary compressible, the torus $\mathcal{T}$ is boundary parallel. By Lemma \ref{NoAnnuliSwitchMove}, both $\mathcal{A}_1, \mathcal{A}_2$ are boundary compressible, thus we reach a contradiction since $\mathcal{T}$ is essential.
\end{proof}

\medskip
\medskip

A situation where Theorem \ref{SwitchMovetheorem} is easily applicable is when $H_1 = H_2,$ and $L_{1}= L_{2}$. See Figure \ref{CuttingOutPiece}.

\begin{figure}[htbp]
\includegraphics[scale=0.6]{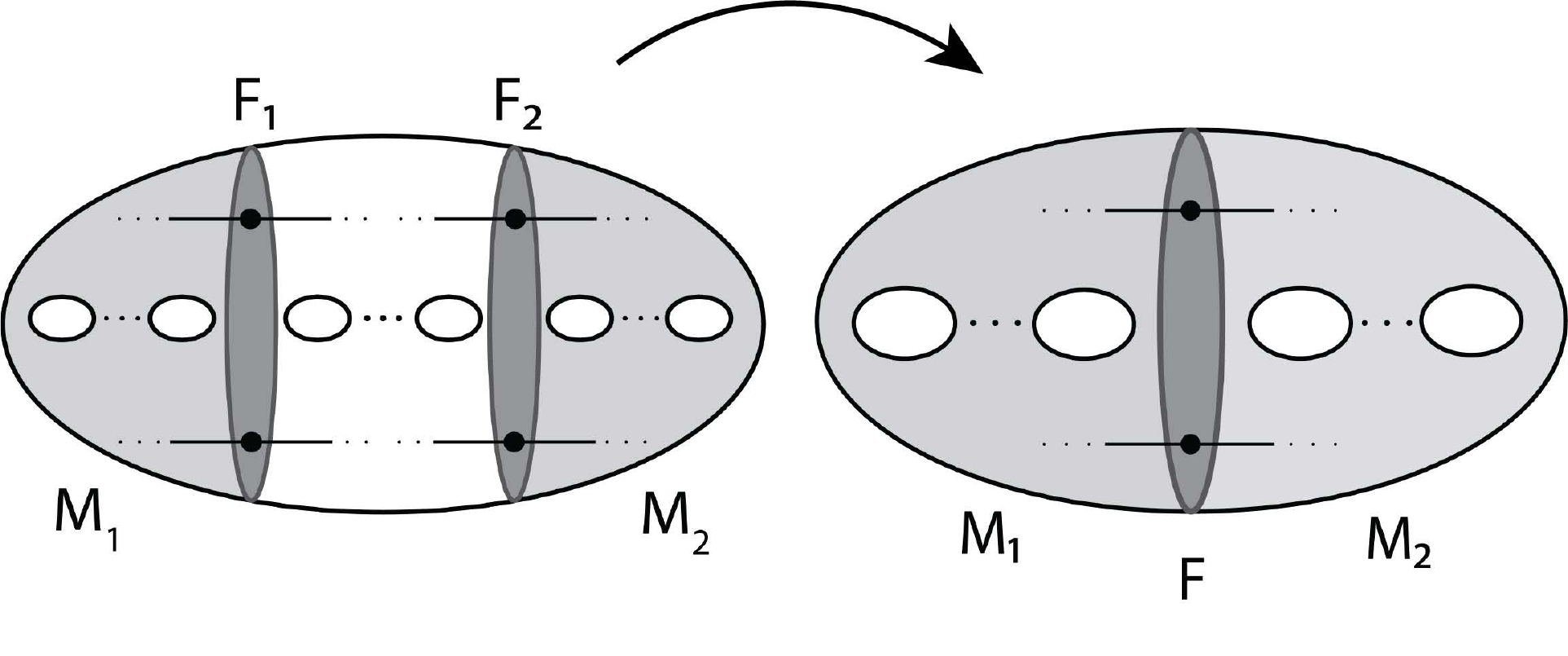}
\caption{Applying Theorem \ref{SwitchMovetheorem} to two pieces in a single handlebody.}
\label{CuttingOutPiece}
\end{figure}

\begin{corollary}
\label{CutOutPiece Corollary}
 Let $(H,L)$ be a handlebody/link pair that is tg-hyperbolic. Let $E_1$ and $E_2$ be two disjoint twice-punctured separating disks in $H$. Then cutting along the two disks, the piece with both disks on the boundary can be discarded and the two pieces with one disk along the boundary, assuming they are positive genus,  can be glued together along those disks, and the resulting handlebody/link pair will be tg-hyperbolic.
 \end{corollary}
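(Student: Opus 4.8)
The plan is to deduce the corollary directly from Theorem \ref{SwitchMovetheorem} by feeding two copies of the single pair $(H,L)$ into the construction. Concretely, I would set $(H_1,L_1)=(H_2,L_2)=(H,L)$, both of which are tg-hyperbolic by hypothesis, cut the first copy along $E_1$, and cut the second copy along $E_2$. The whole point is that nothing in the theorem requires the two input pairs to be distinct, so the ``composition'' construction can be fed the same pair twice while using the two different disks to perform the two cuts.

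First I would analyze how $E_1$ and $E_2$ decompose $H$. Since they are disjoint separating disks in the handlebody $H$, cutting along both yields three handlebodies: an end piece $Q_1$ having only $E_1$ on its boundary, an end piece $Q_2$ having only $E_2$ on its boundary, and a middle piece $M$ carrying both $E_1$ and $E_2$. In the first copy, $E_1$ separates $H$ into $Q_1$ and its complement $P_1 = M \cup_{E_2} Q_2$; in the second copy, $E_2$ separates $H$ into $Q_2$ and its complement $P_2 = M \cup_{E_1} Q_1$. I therefore take $H_{1,1}=Q_1$, $H_{1,2}=P_1$, $H_{2,1}=P_2$, and $H_{2,2}=Q_2$, so that the two surviving glued pieces in the theorem are precisely the two end pieces that the corollary retains.

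Next I would verify the positive-genus hypothesis of the theorem for all four pieces. The end pieces $Q_1$ and $Q_2$ have positive genus by assumption. For the complementary pieces, genus is additive when handlebodies are glued along a properly embedded disk, so $g(P_1)=g(M)+g(Q_2)\ge g(Q_2)\ge 1$ and likewise $g(P_2)\ge g(Q_1)\ge 1$. Hence all four pieces are handlebodies of positive genus, exactly as Theorem \ref{SwitchMovetheorem} demands. Finally, because $E_1$ and $E_2$ are each twice-punctured disks, there is a homeomorphism $\phi:E_1\to E_2$ carrying $\partial E_1$ to $\partial E_2$ and punctures to punctures, which supplies the gluing datum the theorem needs.

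With these identifications in place, Theorem \ref{SwitchMovetheorem} applies verbatim and shows that $(H_{1,1},L_{1,1},E_1)\oplus_\phi (H_{2,2},L_{2,2},E_2)$ is tg-hyperbolic. This glued object is exactly the handlebody/link pair obtained by discarding the middle piece $M$ and gluing the two end pieces $Q_1$ and $Q_2$ along $E_1\cong E_2$, which is the pair described in the statement. I expect the only genuine bookkeeping --- and hence the only place where an error could hide --- to be the verification that the complementary pieces $P_1,P_2$ have positive genus and the correct matching of the theorem's surviving handlebodies $H_{1,1},H_{2,2}$ with the end pieces the corollary keeps; there is no further hyperbolicity argument to supply, since all of the geometric content already resides in Theorem \ref{SwitchMovetheorem}.
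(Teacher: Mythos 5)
Your proposal is correct and is exactly the route the paper intends: the paper proves the corollary by the one-line observation that it is the case $H_1=H_2$, $L_1=L_2$ of Theorem \ref{SwitchMovetheorem}, with the first copy cut along $E_1$ and the second along $E_2$. Your explicit check that the complementary pieces $P_1,P_2$ have positive genus via additivity of genus under gluing along a disk is the right bookkeeping, and it also justifies the paper's follow-up remark that the discarded middle piece need not have positive genus.
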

 
%so that $M_1$ and $M_2$ are submanifolds of the same link complement and $E_1$ and $E_2$ are two disjoint twice-punctured separating disks in the same handlebody, each of which separates the manifold into handlebodies, such that the two pieces that intersect only one disk are positive genus. Theorem \ref{SwitchMovetheorem} then just requires that the complement of the original link in the original handlebody be tg-hyperbolic so that  part of the handlebody can be removed while still preserving tg-hyperbolicity. This is depicted in Figure \ref{CuttingOutPiece}.

Note that the intermediate piece that is being removed need not have positive genus. So, we can remove appropriate tangles from a tg-hyperbolic link in a handlebody and still preserve tg-hyperbolicity. Thus, in order to determine tg-hyperbolicity of a link in a handlebody, all such tangles could be removed and if the resulting simplified link is not tg-hyperbolic because of the presence of an essential sphere, disk, annulus or torus, neither could the original link have been.

The ideas in the proof of Theorem \ref{SwitchMovetheorem} extend to a different setting, where we cut a handlebody into three pieces along disks $E_1$ and $E_2$ and glue one piece to itself along the copies of $E_1$ and $E_2$. 

Suppose $L_{1}$ is a link in a handlebody $H_1$  and $(H_1,L_{1})$ is tg-hyperbolic. Suppose $E_1$ and $E_2$ are two nontrivial separating disks in $H_1$ each punctured twice by $L_1$, which together separate a handlebody $H_{1,2}$ of genus $g_{1,2}$ from two disjoint handlebodies $H_{1,1},H_{1,3}$ of genus $g_{1,1},g_{1,3}$ respectively, with all these genera positive. Let $M_{1,i} = H_{1,i}\setminus \mathring{N}(L_{1}),F_i = E_i \setminus \mathring{N}(L_{1})$. Let $L_{1,2} = L_{1} \cap H_{1,2}$.  
%Connecting the elements of $L_{1,2} \cap E_i$  with embedded arcs in $E_i$ respectively, and pushing the resulting curve into the interior of $H_{1,2}$ yields a link $\overline{L}_{1,2}$ in $H_{1,2}$. Gluing $M_{1,2}$ to itself by an orientation preserving homeomorphism $\phi: F_1 \to F_2$ sending $\partial E_1 $ to $\partial E_2$ and $\partial F_1 \cap \partial N(L_{1})$ to $\partial F_2 \cap \partial N(L_{2})$ yields a link complement $H_L = H \backslash \mathring{N}(L)$ in the handlebody $H$ of genus $g+1$ as in Figure \ref{SelfGluing}. We denote by $F$ the image of $F_1,F_2$ in $H_L$.

%{\color{blue} Older paragraph above removed and  rewritten.}{\color{red} Looks good, commenting old paragraph out.}

%Connecting the elements of $L_{1,2} \cap E_i$  with embedded arcs in $E_i$ respectively, and pushing the resulting curve into the interior of $H_{1,2}$ yields a link $\overline{L}_{1,2}$ in $H_{1,2}$. 

Gluing the subsets $F_1,F_2$ of $\partial M_{1,2}$ together by an orientation preserving homeomorphism $\phi: F_1 \to F_2$ sending $\partial E_1 $ to $\partial E_2$ and $\partial F_1 \cap \partial N(L_{1})$ to $\partial F_2 \cap \partial N(L_{2})$ yields a link complement $H_L = H \backslash \mathring{N}(L)$ in the handlebody $H$ of genus $g_{1,2}+1$ as in Figure \ref{SelfGluing}. We denote by $F$ the image of $F_1$ and $F_2$ in $H_L$.

\begin{figure}[htbp]
\includegraphics[scale=0.6]{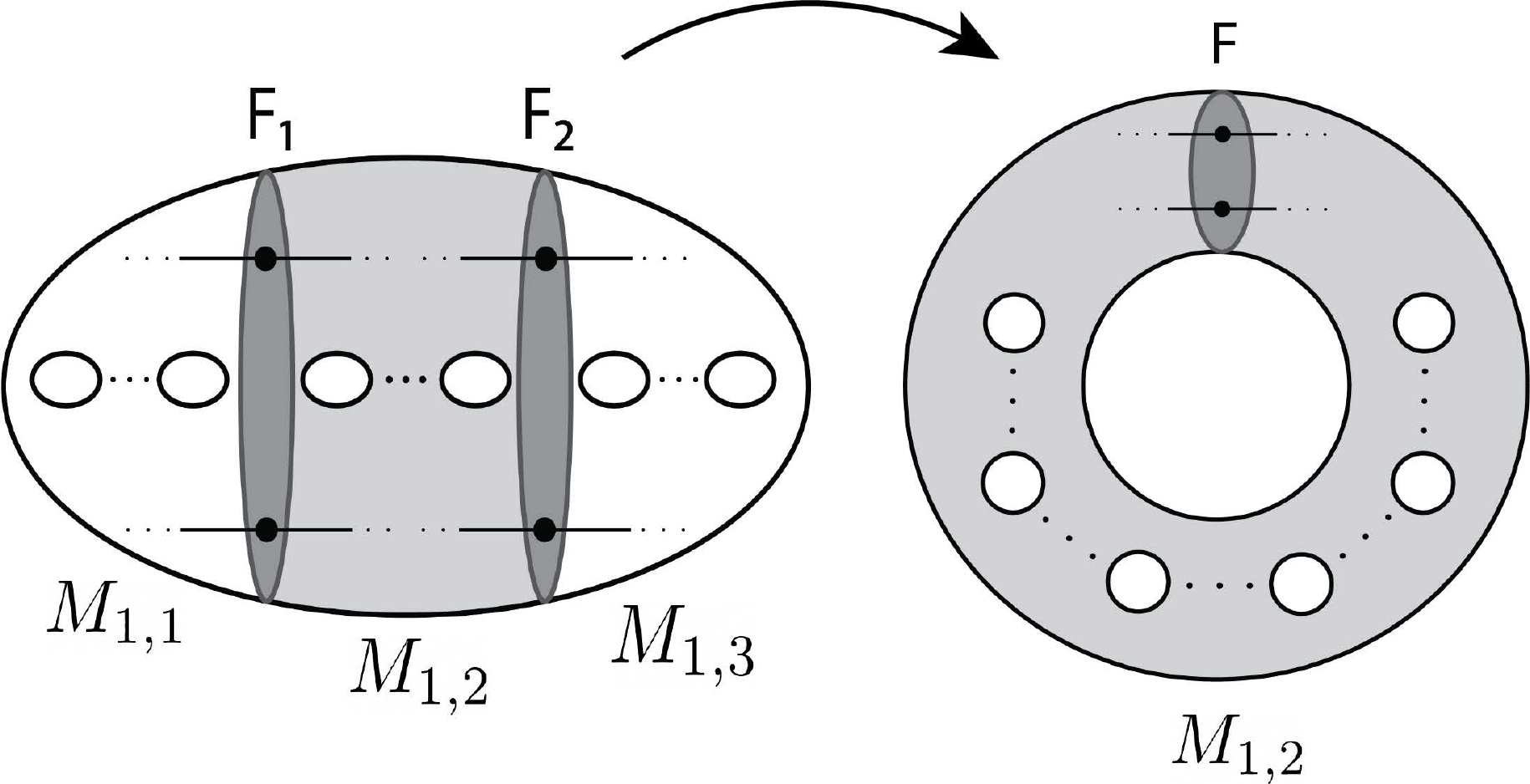}
\caption{Gluing $M$ to itself by a homeomorphism $F_1 \to F_2$.}
\label{SelfGluing}
\end{figure}

\begin{theorem}
\label{SelfGluingTheorem}

Suppose $H_{1} \setminus L_1$ is tg-hyperbolic, and  $E_1 \cap L_{1}=  E_1 \cap K, E_2 \cap L_{1} = E_2 \cap K'$, where $K$ and $K'$ are two distinct components of $L_{1}$,  then $H_L$ is tg-hyperbolic.
\end{theorem}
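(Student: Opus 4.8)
The plan is to mirror the proof of Theorem \ref{SwitchMovetheorem}, re-running the sequence of Lemmas \ref{SurfaceLemmaSwitchMove}--\ref{NoToriSwitchMove} for the self-glued complement $H_L$. The organizing principle stays the same: cutting $H_L$ along $F$ recovers $M_{1,2}$, which is a submanifold of the tg-hyperbolic manifold $H_1 \setminus \mathring{N}(L_1)$, so any essential disk, sphere, annulus, or torus in $H_L$ can, after putting it in minimal position with $F$ and running an innermost-circle/outermost-arc reduction, be converted into an essential surface of the same type in $H_1 \setminus \mathring{N}(L_1)$, a contradiction.

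The essential structural difference is that here $F$ is \emph{non-separating}: cutting $H_L$ along $F$ yields the single connected piece $M_{1,2}$ rather than two pieces $M_1$ and $M_2$. Consequently, wherever the proof of Theorem \ref{SwitchMovetheorem} splits into ``the surface lies in $M_1$ or in $M_2$,'' I instead observe that the surface lies in $M_{1,2}$ and invoke tg-hyperbolicity of $H_1 \setminus \mathring{N}(L_1)$ directly. The hypothesis $K \neq K'$ guarantees that $K \cap M_{1,2}$ and $K' \cap M_{1,2}$ are single arcs $\kappa$ and $\kappa'$ with endpoints on $F_1$ and $F_2$ respectively, so that $\phi$ joins them into one component $\widehat{K} = \kappa \cup \kappa'$ of $L$. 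This $\widehat{K}$ plays the role of $K$ in Theorem \ref{SwitchMovetheorem}, and $T_{\widehat{K}} = \partial N(\widehat{K})$ is split by $F$ into two annuli that are the exact analogues of $A_{K_1}$ and $A_{K_2}$.

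With these substitutions the first four lemmas transfer with only cosmetic changes. For incompressibility and boundary-incompressibility of $F$ (and hence of $F_1, F_2$), a compressing or boundary-compressing disk in minimal position, cut along $F$, lands in $M_{1,2}$ and produces either a compression or a forbidden essential disk or annulus in $H_1 \setminus \mathring{N}(L_1)$, using the once- versus twice-punctured dichotomy for the disk bounded in $E$ exactly as in Lemma \ref{SurfaceLemmaSwitchMove}. Irreducibility and boundary-irreducibility follow by the same innermost-circle and outermost-arc reductions, since the disk component cut off by $F$ lies in $M_{1,2}$. The two annulus lemmas likewise transfer: an annulus disjoint from $F$ sits in $M_{1,2} \subset H_1 \setminus \mathring{N}(L_1)$, and the case analysis on where $\partial A$ lies (on $\partial H$, on a torus component entirely inside $M_{1,2}$, or on $T_{\widehat{K}}$) is unchanged once the two annuli of $T_{\widehat{K}} \cap M_{1,2}$ replace $A_{K_1}$ and $A_{K_2}$.

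The main obstacle is the torus lemma, the analogue of Lemma \ref{NoToriSwitchMove}, precisely because $F$ is non-separating. For an essential torus $\mathcal{T}$ meeting $F$ in a minimal nonempty family of nontrivial circles, cutting along $F$ decomposes $\mathcal{T}$ into annuli that all lie in the single piece $M_{1,2}$, with consecutive annuli meeting along a circle of $\mathcal{T} \cap F$ whose $F_1$-copy and $F_2$-copy are identified by $\phi$. I would first rule out the nested once-/twice-punctured configurations for the disks these circles bound in $E$, exactly as in Lemma \ref{NoToriSwitchMove}, reducing to the case where each such circle bounds a once-punctured disk in $E$; capping the resulting annuli and invoking the already-established annulus lemma forces boundary-parallelism, whence $\mathcal{T}$ itself is boundary-parallel. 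The delicate point is the cycling argument: because both sides of $F$ are now the same piece $M_{1,2}$, I cannot simply alternate between two manifolds, so I would instead track the nesting of the circles $\mathcal{T} \cap F$ inside $E$ under the self-identification $\phi$ to show that the annuli cannot string together into a closed-up essential torus. Verifying that this nesting obstruction survives the self-gluing is the step I expect to require the most care.
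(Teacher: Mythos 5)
Your overall strategy is the same as the paper's: the paper gives only a sketch, asserting that Theorem \ref{SelfGluingTheorem} ``follows from the same arguments as Theorem \ref{SwitchMovetheorem},'' and your reduction of everything to the single piece $M_{1,2}\subset H_1\setminus \mathring{N}(L_1)$, with $\widehat{K}=\kappa\cup\kappa'$ playing the role of $K$, is the right framework. But you have misplaced the crux. You assert that the annulus lemmas ``transfer with only cosmetic changes'' and locate the delicate step in the torus lemma's cycling argument. The paper says the opposite: the hypothesis that $E_1$ and $E_2$ puncture \emph{distinct} components $K$ and $K'$ is needed precisely ``to force an annulus with boundary in $\partial H$ that intersects $F$ in nontrivial arcs to be cut into disks with two opposite sides in $F$,'' i.e.\ in the analogue of Case~1b of Lemma \ref{NoAnnuliSwitchMove}, and the counterexample of Figure \ref{CounterExample} (when the hypothesis is dropped) is exactly an essential \emph{annulus} with boundary on $\partial H$ meeting $F$ in a single nontrivial arc. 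Your proposal, as written, would ``prove'' that false unrestricted statement.

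Concretely, here is what breaks. In Theorem \ref{SwitchMovetheorem}, $F$ separates, so the spanning arcs of $A\cap F$ cut $A$ into rectangles alternating between $M_1$ and $M_2$; in particular $|A\cap F|$ is even, and each rectangle $D_1$ has both of its $F$-sides on the same copy of $F$ in the boundary of the piece containing it, so one can choose the rectangle $R\subset F$ cobounded by those two arcs and form the annulus or M\"obius band $D_1\cup R$. In the self-gluing, $F$ is non-separating: there is no parity constraint, $|A\cap F|$ can equal $1$, and a component of $A$ cut along $F$ can have one $F$-side on $F_1$ and the other on $F_2$, which are disjoint subsurfaces of $\partial M_{1,2}$; then no rectangle $R$ in $F$ joins them and the construction $D_1\cup R$ has no analogue. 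Your proof must show that the hypothesis $K\neq K'$ excludes this configuration --- for instance by analyzing how the nontrivial arcs of $A\cap F$ (which have endpoints on $\partial E$ and must separate the two punctures of $F$) interact with the two punctures, which are joined on one side of $F$ by $\kappa\subset K$ and on the other by $\kappa'\subset K'$, forcing the sides of each complementary rectangle to land on the same copy $F_1$ or $F_2$. Until you supply that argument, the proposal has a genuine gap at the one point where the extra hypothesis of the theorem is actually consumed. Your concern about the cycling step in the torus lemma is reasonable but secondary; it can be handled by the same nesting bookkeeping as in Lemma \ref{NoToriSwitchMove} once the annulus lemma is in place.
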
  %{\color{blue} Do we need $H_{1,2} \setminus \mathring{N}(\overline{L}_{1,2})$  tg-hyperbolic any more?} {\color{red} I don't think we do.}
\noindent Theorem \ref{SelfGluingTheorem} follows from the same arguments as Theorem \ref{SwitchMovetheorem}. Namely, the surfaces $F,F_1,$ and $F_2$ are incompressible and boundary incompressible, and we can use this to reach the analogous contradictions from Lemmas \ref{SurfaceLemmaSwitchMove}-\ref{NoToriSwitchMove}. The requirement that the punctures of $E_1$ and $E_2$ correspond to two distinct components $K$ and $K'$  of $L$ must be introduced to force an annulus with boundary in $\partial H$ that intersects $F$ in nontrivial arcs to be cut into disks with two opposite sides in $F$. Without this condition the result does not hold in general, as shown in Figure \ref{CounterExample}.

\begin{figure}[htbp]
\includegraphics[scale=0.6]{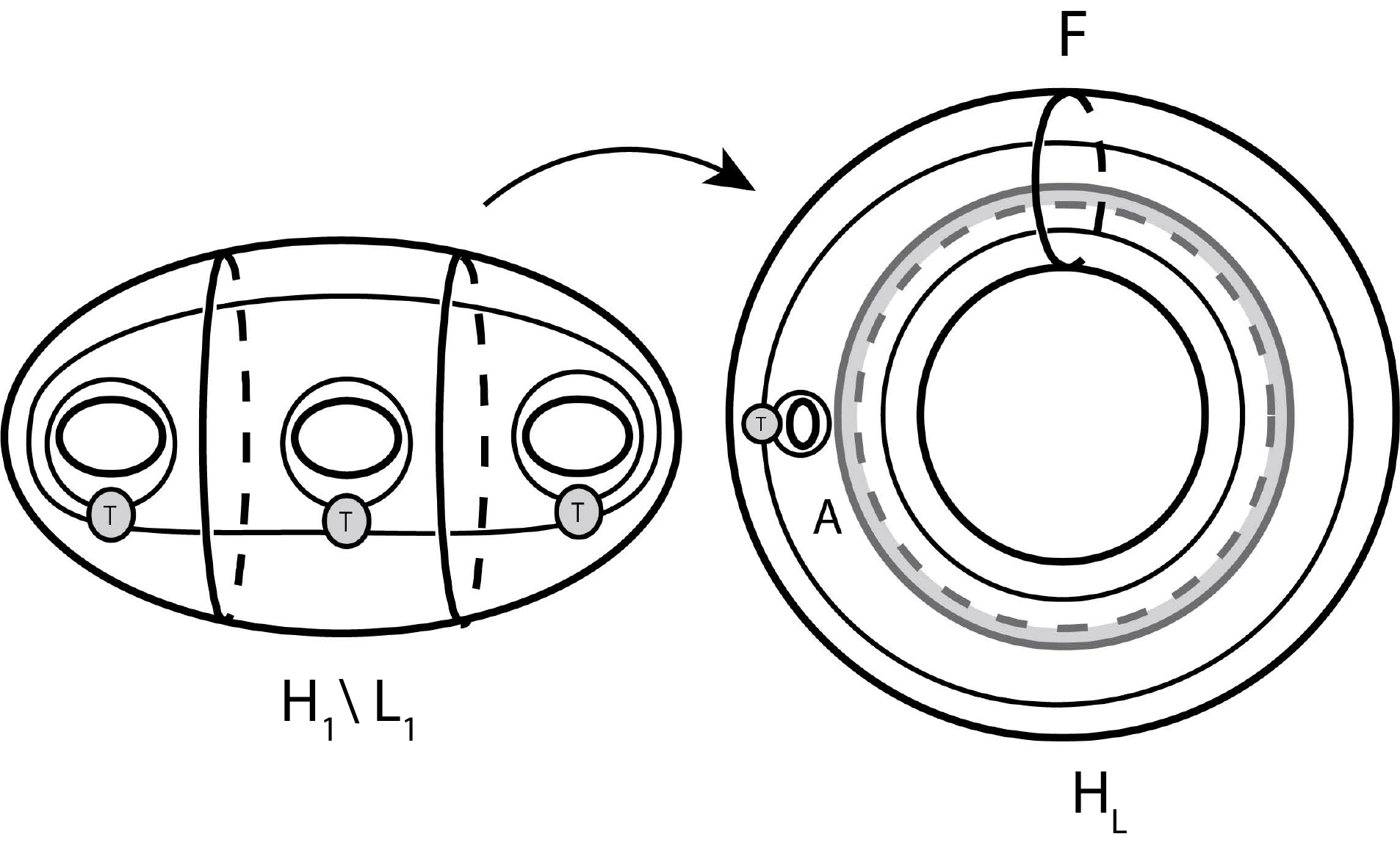}
\caption{A counterexample to Theorem \ref{SelfGluingTheorem} when the condition on the punctures of $E_1,E_2$ is removed. Here $T$ is an alternating tangle which can be chosen to satisfy the conditions of Theorem 1.6 of \cite{AltPaper} (appearing in the next section) so that $H_1 \setminus L_1$ tg-hyperbolic. After cutting and gluing, $H_L$ contains an essential annulus $A$ with boundary in $\partial H$ as shown (perpendicular to the page), which intersects $F$ in a single nontrivial arc and which separates one component of the link.%{\color{blue} Changed labels on link and modified caption.}
}
\label{CounterExample}
\end{figure}

\section{Applications}
\label{AppSection}

\subsection{Staked Links}

 Links in handlebodies are directly  related to the theory of staked links defined in \cite{generalizedknotoids}. (These links are also called tunnel links as in \cite{tunnellink} or starred links as in  as-of-yet unpublished work of N. G\"ug\"umc\"u and L. Kauffman.)  In this section we will only work with staked links in $S^2$. A \textit{staked link} is a pair $(L_D,\{p_i\}_{1 \leq i \leq n})$ of a link diagram $L_D \subset S^2$ together with a finite collection $\{p_i\}_{1 \leq i \leq n}$ of \textit{isolated poles}, which are distinct points $p_1,\dots,p_n \in S^2$ such that each $p_i$ lies in a connected component of $S^2 \setminus L_D$. Staked links are considered up to Reidemeister moves that do not pass strands over elements of $\{p_i\}_{1 \leq i \leq n}$. A staked link determines a link in a handlebody of genus $n-1$ as follows. Choose open disks $D_1,\dots,D_n \subset S^2 \setminus L_D$ containing $p_1, \dots, p_n$ respectively, such that $D_i \cap D_j = \emptyset$ for $i \neq j$. Then $D_L := S^2 \setminus (\cup_{i=1}^n D_i)$ is the closure of a $n-1$ punctured disk and $L_D$ determines a link $\overline {L}_D$ in the handlebody $D_L \times [0,1]$ as shown in Figure \ref{StakingFigure}. A staked link $(L_D,\{p_i\}_{1 \leq i \leq n})$ is tg-hyperbolic if $(D_L \times [0,1], \overline{L}_D)$ is hyperbolic as in Section \ref{Intro}.

 \begin{figure}[htbp]
 
 \includegraphics[scale=0.4]{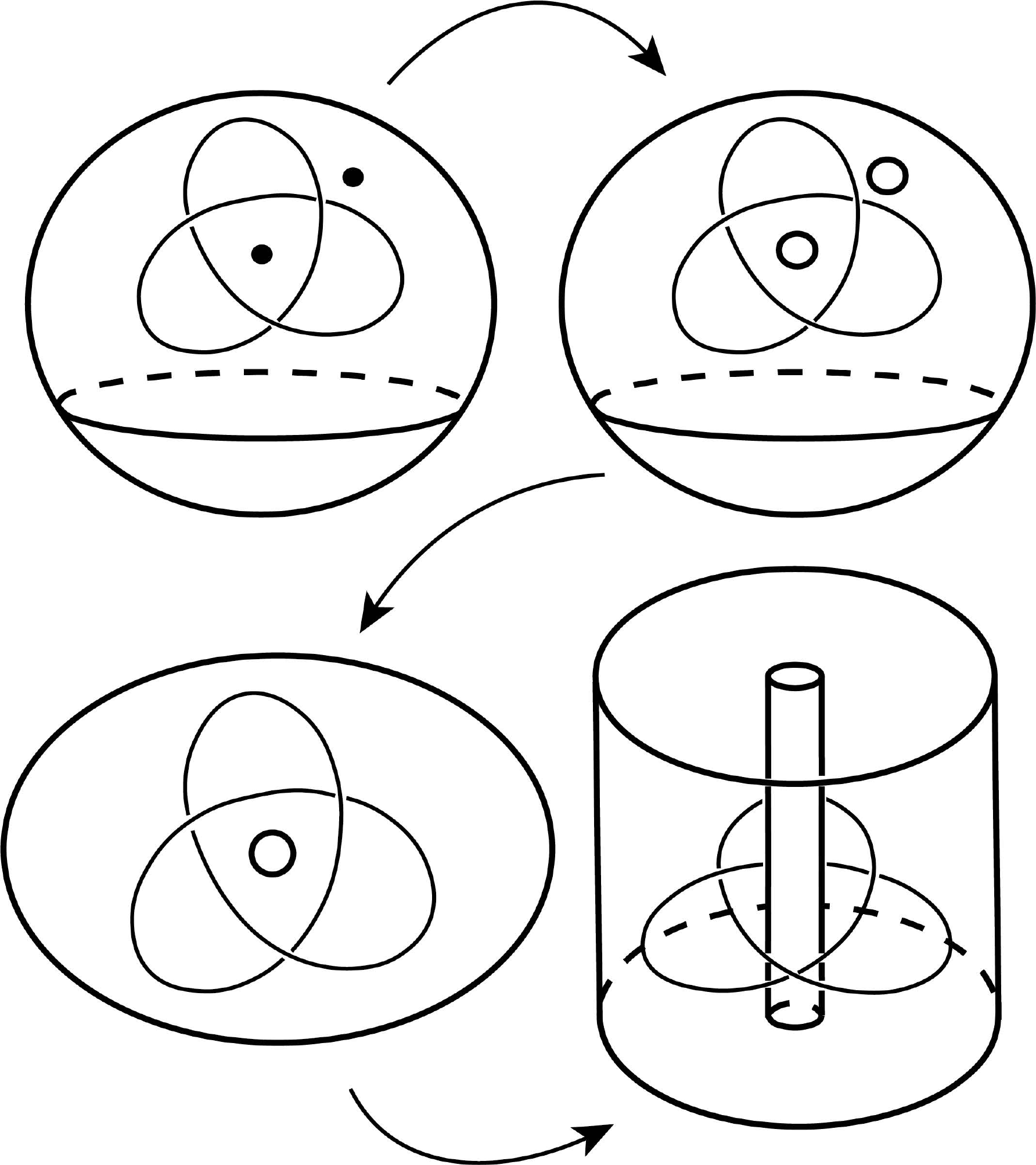}
 \caption{A staked link $L_D \subset S^2$ with $n$ stakes determines a link $\overline{L}_D$ in a handlebody of genus $n-1$.}
\label{StakingFigure} 
 \end{figure}
 
  \indent Given a staked link $(L_D,\{p_i\}_{1\leq i \leq n})$, any simple closed loop $\gamma:  [0,1] \to S^2$ with $\gamma(0) = \gamma(1) = p_i$ determines a proper non self-intersecting arc $a_{\gamma} \subset S^2 \setminus (\cup_{i=1}^n D_i)$ with $\partial a_{\gamma} \subset \partial D_i$, and hence a proper separating disk $a_{\gamma} \times [0,1]$ in $D_L \times [0,1]$, as in Figure \ref{StakedApplicationImage}. If $\gamma$ intersects $L_D$ twice, this disk could come from a gluing operation satisfying the conditions of Theorem \ref{SwitchMovetheorem}, hence Theorem \ref{SwitchMovetheorem} gives a way to check if a  complicated staked link is hyperbolic by checking if it is cut by $\gamma$ into pieces which come from hyperbolic staked links.

 \begin{figure}[htbp]
 
\includegraphics[scale=0.35]{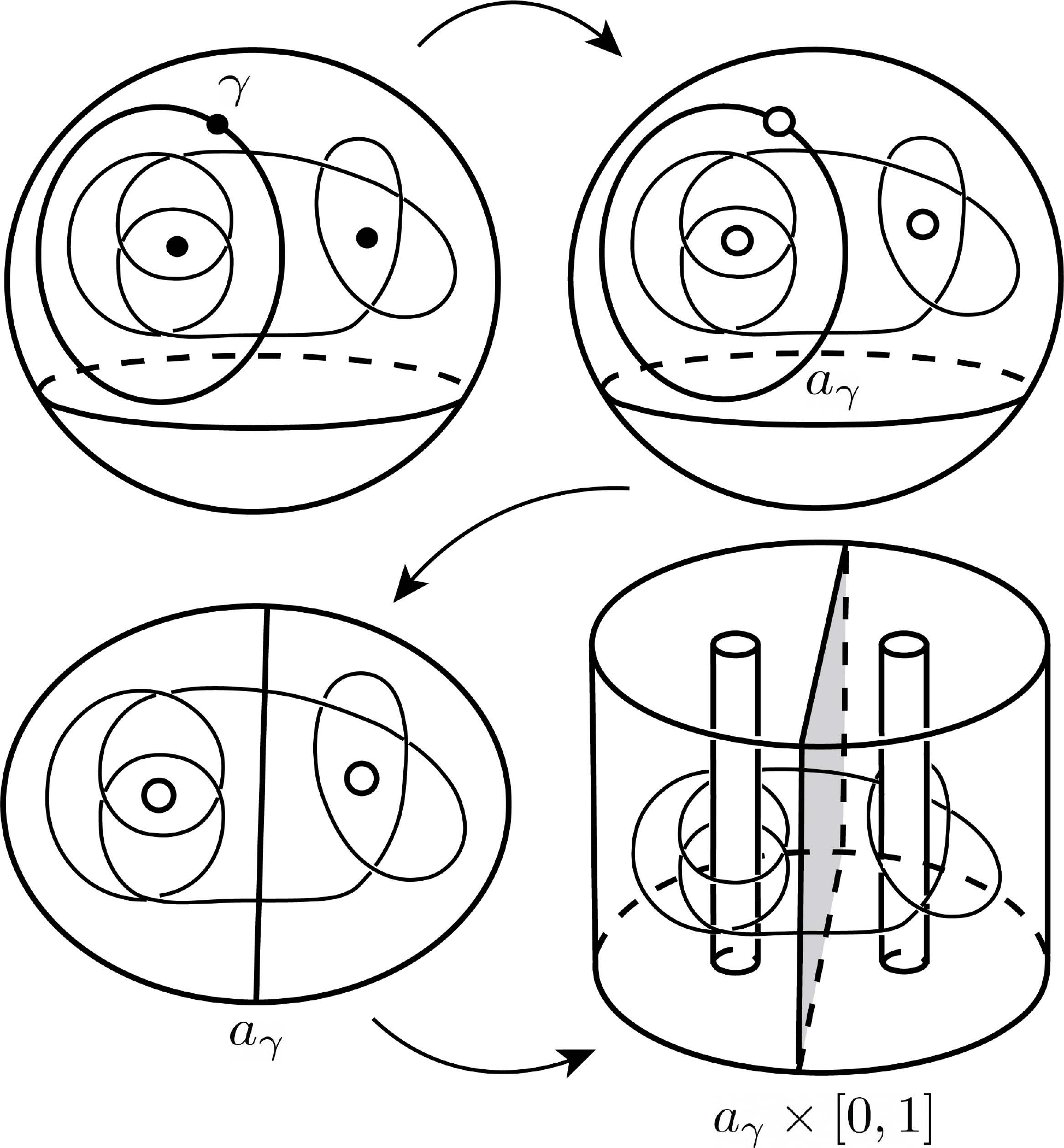}

\caption{A simple closed loop $\gamma$ based at a pole of a staked knot determines a separating disk in the corresponding handlebody.%{\color{blue} Daniel, can we modify the labels on this  picture a little? The $\gamma$ looks like a y. Try using font "symbol" (but maybe you already are.) And $a_\gamma$ in the second picture looks like it labels the pole rather than the arc. Maybe move the label down near the bottom of the arc. And for the last picture, switch label to be $\a_{\gamma} \times [0,1]$ to be consistent with text. I added a reference in the text to this figure.} {\color{red} Fixed,let me know} {\color{blue} Great! Thanks!}
}
\label{StakedApplicationImage} 
 \end{figure}

\subsection{Alternating Links}

To show a link in a handlebody $(H,L)$ is tg-hyperbolic, it is sufficient to show that $H$ can be given a product structure $H \cong F \times [0,1]$, where $F$ is the closure of a disk punctured some nonzero number of times, such that the projection of $L$ to the surface $F \times \{1/2\}$ is alternating and satisfies conditions as follows.

 \begin{theorem}[Theorem $1.6$ in \cite{AltPaper}] \label{thm-thickened}
Let $F$ be a projection surface with nonempty boundary which is not a disk, and let $L \subset F \times I$ be a link with a connected, reduced, alternating projection diagram $\pi(L) \subset F \times \{1/2\}$ with at least one crossing.  Let $M = (F \times I) \setminus N(L)$. Then $M$ is tg-hyperbolic if and only if the following four conditions are satisfied: 
\begin{enumerate} [label = (\roman*)]
    \item $\pi(L)$ is weakly prime on $F \times \{1/2\}$; 
    
    \item the interior of every complementary region of $(F \times \{1/2\}) \setminus \pi(L)$ is either an open disk or an open annulus;
    
    \item if regions $R_1$ and $R_2$ of $(F \times \{1/2\}) \setminus \pi(L)$ share an edge, then at least one is a disk;
    
    \item there is no simple closed curve $\alpha$ in $F$ that intersects $\pi(L)$ exactly in a nonempty collection of crossings,  such that for each such crossing, $\alpha$ bisects the crossing and the two opposite complementary regions meeting at that crossing that do not intersect $\alpha$ near that crossing are annuli.
\end{enumerate}

\end{theorem}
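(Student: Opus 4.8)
The plan is to appeal to the Thurston criterion recalled in Section~\ref{Intro}: $M$ is tg-hyperbolic precisely when it contains no essential disk, sphere, annulus, or torus. Since the statement is an equivalence, I would prove it in two halves. For sufficiency I would assume (i)--(iv) and rule out all four kinds of essential surfaces; for necessity I would show that the failure of any single condition produces one of them. The engine of the argument is a Menasco-style normal form: place a crossing ball (``bubble'') at each crossing of $\pi(L)$ so that, away from the bubbles, $L$ lies on the projection surface $P := F \times \{1/2\}$, and each bubble records an over/under strand. The reduced, connected, alternating hypotheses are exactly what make it possible to isotope an essential surface $S$ so that $S \cap P$ is a disjoint union of circles and arcs, $S$ meets each bubble in saddles, and no trivial configuration (a saddle cut off by a single arc, a circle of $S \cap P$ bounding a disk of $S$ to one side of $P$, and so on) survives.

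Sufficiency is the core. With $S$ in normal form I would study the curves of $S \cap P$. For an essential sphere or disk, an innermost curve on $S$ bounds a disk meeting the diagram in a controlled way, and weak primeness~(i) together with connectedness of $\pi(L)$ forces that disk to be trivial, so $S$ can be simplified away; this is the surface analogue of Menasco's argument ruling out essential spheres and meridional disks for prime alternating diagrams, and it also handles incompressibility of the totally geodesic boundary $\partial(F\times I)$. For annuli and tori I would track the family $S \cap P$ of parallel essential curves on $P$ and follow how they must close up through the bubbles. Condition~(ii) keeps the complementary regions simple enough (disks or annuli) that these arcs cannot be hidden inside a complicated region; condition~(iii) is precisely what forbids an essential annulus running between two annular regions across a shared edge; and condition~(iv) is tailored to block the single way an essential torus can be assembled, namely by a curve bisecting crossings whose two off-side regions are annuli. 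Matching each exclusion to the condition that rules it out is where the work lies.

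For necessity I would argue the contrapositive, building an explicit essential surface whenever a condition fails. If~(i) fails, a disk in $F$ whose boundary meets $\pi(L)$ transversally in two points with crossings inside tubes along $L$ into an essential annulus (the analogue of the decomposing annulus of a composite link). If~(ii) fails, a complementary region that is neither a disk nor an annulus carries an essential simple closed curve whose vertical pushoff $c \times I$ gives an essential annulus or torus. If~(iii) fails, the edge shared by two annular regions is spanned by a visibly essential annulus. If~(iv) fails, the curve $\alpha$ is the equator of an essential torus obtained by pushing $\alpha$ off $P$ through the bubbles it bisects and closing up using the two annular off-regions; this is exactly the accidental torus that~(iv) is designed to forbid.

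The hard part, where I would concentrate, is the normal-form analysis in the sufficiency direction, for two reasons that are genuinely new relative to the classical $S^3$ theory. First, Menasco's original setting assumes every complementary region is a disk, whereas~(ii) deliberately allows annular regions; curves of $S \cap P$ may now thread through annuli, and the extra boundary patterns this permits are exactly what~(iii) and~(iv) are calibrated to kill, so the bookkeeping must be redone with annular regions present. Second, $M$ has a genuine higher-genus totally geodesic boundary rather than an everywhere-bubbled sphere, so boundary compressions and boundary-parallel surfaces must be tracked alongside the closed analysis. As a fallback for sufficiency I would keep the polyhedral route in reserve: decompose $M$ using the two checkerboard surfaces into bipyramid-like pieces and produce an explicit angle structure, in the spirit of the decompositions in \cite{hp17,small18}, from which hyperbolicity would follow directly. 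But I expect the sharp iff-statement, and especially the precise roles of~(iii) and~(iv), to be cleanest through the standard-position method.
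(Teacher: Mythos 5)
This statement is not proved in the paper at all: it is quoted verbatim as Theorem~1.6 of \cite{AltPaper} and used as an external tool in the applications section, so there is no in-paper argument to compare your proposal against. Within the present paper the correct ``proof'' is simply the citation, and any review of your attempt has to be a review of your reconstruction of the argument in \cite{AltPaper}, not of anything here.

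On its own terms, your outline points at the right strategy --- the proof in the cited source is indeed a Menasco-style normal-position argument with crossing bubbles, adapted to a projection surface with boundary, and your necessity direction correctly matches each of (i)--(iv) to the explicit essential annulus or torus its failure produces. But as written this is a plan, not a proof: the entire sufficiency direction, which you yourself identify as the hard part, is deferred. In particular you do not carry out the case analysis showing that a normal-form essential annulus or torus must exhibit one of the configurations forbidden by (ii)--(iv); you assert that condition (iv) ``is tailored to block the single way an essential torus can be assembled'' without demonstrating that it is the single way, and that is precisely the content of the theorem. The sentence constructing the surface when (i) fails is also garbled (a disk in $F$ meeting $\pi(L)$ twice with crossings inside yields an essential annulus in $(F\times I)\setminus N(L)$ only after a specific tubing construction whose essentiality must be checked). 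If you intend to supply a proof rather than a citation, the normal-form bookkeeping with annular complementary regions present --- which is exactly where this theorem departs from Menasco's closed-disk-regions setting --- is the part that must actually be written out.
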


By weakly prime we mean that there is no simple closed curve on the projection surface that crosses the link twice and that bounds a disk that contains crossings. Note that each of these conditions is easily checked for the projection.

In the notations of Section \ref{SwitchSection}, this gives a simple way to show that $(H_1,L_{1})$ and $(H_2,L_{2})$ are tg-hyperbolic.  Note that Theorem \ref{SwitchMovetheorem} gives the expected behavior when both $L_{1},L_{2}$ are alternating and $K_1,K_2$ glue together so that $K$ is alternating. In particular, Theorem \ref{SwitchMovetheorem} can apply in the general situation of gluing an alternating piece to a non-alternating piece. 

As an example, for any weakly prime alternating tangle $T$ as in Figure \ref{AltApplication} other than 0 or 1 crossing or a horizontal sequence of bigons, (which do not satisfy the conditions of the theorem), we can form the piece $M_T$. Then if we take any other hyperbolic knot in a handlebody of positive genus, and split it into two pieces of positive genus by a twice-punctured disk, we can glue either resulting piece to the piece $M_T$ and still generate a tg-hyperbolic handlebody/link pair. % An example of this is shown in Figure \ref{AltApplication}.

\begin{figure}[htbp]
\includegraphics[scale=0.4]{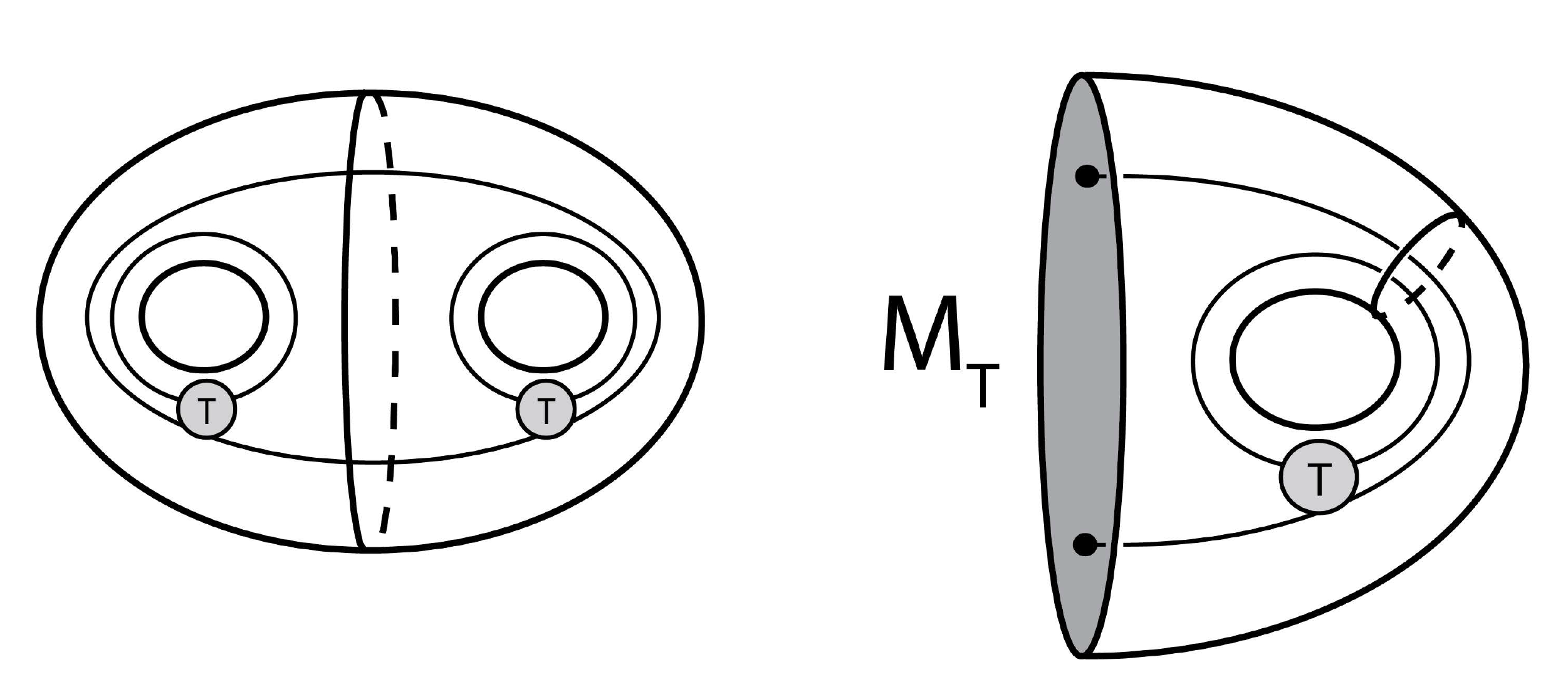}
\caption{If $T$ is an alternating tangle satisfying simple restrictions, the genus 2 handlebody/link pair depicted is tg-hyperbolic, so  we can glue $M_T$ to any other piece from a hyperbolic handlebody/link pair to obtain another tg-hyperbolic handlebody/link pair.  %Hence by Theorem \ref{SwitchMovetheorem} we can take any hyperbolic knot in a cut along any twice punctured nontrivial separating disk in a hyperbolic disk in  and glue in the manifold $M_T$ while still preserving hyperbolicity.{\color{blue} I am confused here. Why do we need genus 2 and genus 1 pictures?  And what does "nontrivial disk" mean in this context? That there is genus to both sides? If so, we need to make that explicit. Also, I would use an R for the tangle rather than a T, so it is clear whether or not the two tangles in the genus 2 case are the same as one another or reflections of one another.} 
} %{\color{blue} Note rewrite above and in caption. I believe for the example given, the conditions in the alternating theorem are satsified unless $T$ is a sequence of bigons. I also removed all references that were not cited. Turns out the nocite command does not work under our choice of bibliography system. }
\label{AltApplication}
\end{figure}

 \subsection{Planar Knotoids}

Knotoids are a variation on knots given by projections of line segments defined up to Reidemeister moves and disallowing strands to pass over or under the endpoints of the segment. When the projection surface is a plane, we say the knotoid is a planar knotoid. In \cite{hypknotoids}, two definitions of hyperbolicity of planar knotoids were given. The first, which is called the planar reflected doubling map, associates to the knotoid a link in a genus three handlebody. If the complement of the link is tg-hyperbolic, the knotoid is said to be hyperbolic under the reflected doubling map. The second, which is called the planar gluing map, associates to the knotoid a link in a genus two handlebody. Again, if the complement of the link is tg-hyperbolic, the knotoid is said to be hyperbolic under the gluing map. Proposition 2.5 in \cite{hypknotoids} proves that hyperbolicity of a planar knotoid under the reflected doubling map implies hyperbolicity under the gluing map but not vice versa. Further, the volume under the reflected doubling map is always at least as large as the volume under the gluing map. Theorem \ref{SwitchMovetheorem} together with the results from \cite{AltPaper} can provide many examples of planar knotoids that are hyperbolic  under either of the two constructions.
%{\color{blue} Above subsection is new.}

%{\color{red} In the notation of theorem \ref{SwitchMovetheorem}, the genus two picture can be taken for the pair $(H_1,L_1)$, and the genus one picture for $(H_{1,1},\overline{L}_{1,1})$, while $(H_2,L_2)$ is another arbitrary hyperbolic pair we are cutting a part out of. That is what I mean for nontrivial disk yes, I will edit it to make it more clear. I really mean the same tangle, we just need everything to be alternating to apply Joye's theorem.} {\color{red} With our new result we don't need the genus $1$ picture. I will edit the figure.}

\bibliographystyle{plain}
\bibliography{refs2}
\nocite{*}
\end{document}